\begin{document}
	
	\newtheorem{theorem}{Theorem}[section]
	\newtheorem{lemma}[theorem]{Lemma}
	\newtheorem{proposition}[theorem]{Proposition}
	\newtheorem{corollary}[theorem]{Corollary}
	\newtheorem{question}[theorem]{Question}

	\theoremstyle{definition}
	\newtheorem{definition}[theorem]{Definition}

	\theoremstyle{remark}
	\newtheorem{example}[theorem]{Example}
	\newtheorem{remark}[theorem]{Remark}

	\newcommand{\cod}{\operatorname{cod}}
	\newcommand{\dime}{\operatorname{dim}}
	\newcommand{\Hom}{\operatorname{Hom}}
	\newcommand{\Ker}{\operatorname{Ker}}
	\renewcommand{\ker}{\operatorname{Ker}}

	
	\newcommand{\K}{\mathbb{K}}
	\newcommand{\R}{\mathbb{R}}
	\newcommand{\N}{\mathbb{N}}
	\newcommand{\C}{\mathbb{C}}
	\newcommand{\Q}{\mathbb{Q}}
	\newcommand{\Z}{\mathbb{Z}}
	\newcommand{\Proj}{\mathbb{P}}
	\newcommand{\A}{\mathbb{A}}
	\newcommand{\F}{\mathbb{F}}
	\newcommand{\G}{\mathbb{G}}

	
	\newcommand{\Oh}{\mathcal{O}}
	\newcommand{\sA}{\mathcal{A}}
	\newcommand{\sB}{\mathcal{B}}
	\newcommand{\sC}{\mathcal{C}}
	\newcommand{\sE}{\mathcal{E}}
	\newcommand{\sF}{\mathcal{F}}
	\newcommand{\sG}{\mathcal{G}}
	\newcommand{\sH}{\mathcal{H}}
	\newcommand{\sI}{\mathcal{I}}
	\newcommand{\sK}{\mathcal{K}}
	\newcommand{\sM}{\mathcal{M}}
	\newcommand{\sN}{\mathcal{N}}
	\newcommand{\sP}{\mathcal{P}}
	\newcommand{\sR}{\mathcal{R}}
	\newcommand{\sT}{\mathcal{T}}
	\newcommand{\sU}{\mathcal{U}}
	\newcommand{\sV}{\mathcal{V}}
	\newcommand{\sY}{\mathcal{Y}}
	
	\newcommand{\NS}{NS}

	\newcommand\blfootnote[1]{%
		\begingroup
		\renewcommand\thefootnote{}\footnote{#1}%
		\addtocounter{footnote}{-1}%
		\endgroup
	}


\address{Fabrizio Anella\\ Dipartimento di Matematica e Fisica\\ Universit\`{a} Roma 3\\ Largo San Leonardo Murialdo 1\\ 00146\\ Rome\\ Italy}
\email{fabrizio.anella2@uniroma3.it}
	
	\author{Fabrizio Anella}
	\date{March 2019}
	\title{Rational curves on fibered varieties}

	\begin{abstract}
		Let $X$ be a projective variety with log terminal singularities and vanishing augmented irregularity. In this paper we prove that if $X$ admits a relatively minimal genus one fibration then it does contain a subvariety of codimension one covered by rational curves contracted by the fibration. We then focus on the case of varieties with numerically trivial canonical bundle and we discuss several consequences of this result.
	\end{abstract}
\maketitle
\blfootnote{\textup{2010} \textit{Mathematics Subject Classification}. Primary: 14J32; Secondary: 14E30, 14D06.} 
\blfootnote{\emph{Key words and phrases}. Elliptic fiber space, genus one fibration, Calabi--Yau variety, fibration, Fisher--Grauert Theorem, rational curve.} 

	\section*{Introduction}

Finding rational curves in a projective variety $X$ is useful to understand the geometry of $X$ because these curves are strongly related to many invariants. Rational curves on Calabi--Yau varieties are particularly useful but the existence of such curves in full generality on these varieties is proven only in dimension two by Bogomolov--Mumford \cite{mori1983uniruledness}. 
On K3 surfaces there are rational curves in any ample linear series. This leads to define Beauville--Voisin class as the zero-cycle class of a point on a rational curve \cite{beauville2001chow}. In higher dimension doing this is more difficult because it is hard to find an ample divisor $H\overset{i}{\rightarrow} X$ with $i_*(CH_0(H))=\Z$, and moreover we do not expect in general that there is a divisor with $CH_0(H)=\Z$, e.g. $H$ rational. Let us briefly give a couple of other motivations: a rational morphism to a manifold without rational curves is everywhere defined; finding a rational curve on a variety implies that the variety is not hyperbolic in the sense of Kobayashi. This list can be made much longer. 

The experience with minimal model program suggests that even if one is mainly interested in smooth varieties, the natural setting is to allow at least log terminal singularities. The aim of this paper is to extend the results proven in \cite{diverio2016rational} in a singular setting typical of the minimal model program. We use some techniques that lead us to prove some new results also in the smooth case.

The core of this paper is the following result.
\begin{theorem}
	\label{due}
	Let $X$ be a normal projective variety of dimension $n$ with at most log terminal singularities and vanishing augmented irregularity, \textsl{i.e.} the irregularity of any quasi-\'etale cover of $X$ is zero. Suppose that there exists a surjective morphism $\phi$ from $X$ to a variety $B$ of dimension $n-1$. If there exists a cartier divisor $L$ on $B$ such that $\phi^* L \sim  K_X$, then there exists a subvariety of codimension one in $X$ that is covered by rational curves contracted by $\phi$.
\end{theorem}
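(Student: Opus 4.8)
The plan is to analyse the singular fibres of the genus one fibration $\phi$ and to locate a divisorial component of the base over which the fibres acquire rational curves. First I would record that $\phi$ is genuinely a genus one fibration: for general $b\in B$ the fibre $F=\phi^{-1}(b)$ is smooth by generic smoothness, its normal bundle $N_{F/X}\cong\Oh_F^{\oplus(n-1)}$ is trivial, and adjunction together with $\phi^*L\sim K_X$ gives $K_F\sim (K_X)|_F\sim(\phi^*L)|_F\sim 0$, so the general fibre is a smooth elliptic curve. Let $B^\circ\subseteq B$ be the open locus over which $\phi$ is a smooth morphism with reduced elliptic fibres, and set $\Delta=B\setminus B^\circ$; since $X$ is normal its singular locus has codimension at least two, so this is the expected degeneration locus. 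Because the rational curves we seek are contracted by $\phi$ they must lie in fibres, and it suffices to produce a prime divisor $D\subseteq B$ whose fibres contain rational curves: as $D$ varies these curves cover a subvariety of dimension $(n-2)+1=n-1$, i.e. of codimension one in $X$.

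The heart of the argument is a dichotomy. Localising at the generic point of a prime divisor $D\subseteq\Delta$ --- concretely, cutting $X$ by a general complete intersection surface lying over a general curve in $B$ through $D$ --- reduces the fibre over $D$ to a Kodaira degeneration of an elliptic surface, the relative minimality being guaranteed by $K_X\equiv_\phi 0$. Every Kodaira fibre other than the smooth one $I_0$ and its multiples $mI_0$ contains a rational curve. Thus either (A) some prime divisor of $\Delta$ carries fibres of a Kodaira type different from $mI_0$, in which case its fibres contain rational curves and we are done; or (B) every divisorial component of $\Delta$ is a multiple fibre $mI_0$ of a smooth elliptic curve and all further degenerations sit in codimension at least two. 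In case (B) the $j$-invariant defines a morphism $B^\circ\to\A^1$ which, being finite along the reduced elliptic multiple fibres and undefined only in codimension at least two, extends by normality of $B$ to a morphism $B\to\A^1$; as $B$ is projective this morphism is constant, so $\phi$ is isotrivial.

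It remains to rule out case (B) using the vanishing of the augmented irregularity, and this is where the hypotheses are spent. The multiple fibres are removed by base changes branched along the corresponding divisors to order equal to the multiplicity: the branching of the base is absorbed by the multiplicity of the fibre, so the induced cover $\tilde X\to X$ is étale in codimension one, i.e. quasi-\'etale. After these covers the degeneration locus has codimension at least two, and over its complement $\phi$ is, by the Fischer--Grauert theorem, an analytic fibre bundle with fibre a fixed elliptic curve $E$. A further quasi-\'etale cover kills the finite monodromy in $\mathrm{Aut}(E,0)$; on the resulting cover the translation-invariant differential on the fibres is monodromy-invariant and hence globalises to a nonzero holomorphic one-form, which extends across the codimension-two degeneration locus because $X$ has log terminal singularities (extension of reflexive differentials). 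Consequently this quasi-\'etale cover has irregularity at least one, contradicting the vanishing of the augmented irregularity. Therefore case (B) cannot occur and case (A) holds.

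I expect the main obstacle to be the rigorous handling of the singular total space together with the construction of the quasi-\'etale covers. Verifying that a base change branched to the right order along a multiple fibre really is étale in codimension one on the singular variety $X$, controlling the finiteness of the monodromy and extending the trivialising cover to a genuine quasi-\'etale cover of $X$ rather than merely of $B$, and making the $j$-invariant and Fischer--Grauert arguments function despite non-flatness of $\phi$ and singularities of the fibres, are the delicate points. In effect the whole weight of the two hypotheses --- log terminality, which licenses the extension of differential forms, and the vanishing of the \emph{augmented} irregularity, which detects the hidden one-forms only after passing to covers --- is concentrated in the exclusion of the isotrivial case.
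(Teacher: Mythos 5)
Your overall architecture (general fibre is a genus one curve; dichotomy between a divisorial degeneration locus carrying non-$mI_0$ Kodaira fibres, which contain rational curves, and an isotrivial situation to be excluded by the vanishing of the augmented irregularity via a one-form on a quasi-\'etale cover) matches the paper's. But your route for excluding case (B) has two genuine gaps.

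First, the elimination of multiple fibres. You propose to remove an $mI_0$ fibre over a prime divisor $D\subset B$ by a base change ``branched along $D$ to order $m$,'' but you never produce this cover: a global cyclic cover of $B$ branched to order exactly $m$ along $D$ and unramified elsewhere requires $[D]$ to be divisible by $m$ in $\operatorname{Pic}(B)$ (or some substitute such as gluing local logarithmic transforms into a single finite cover of $X$), and none of this is available for free. The paper avoids the issue entirely: it cuts down to a normal surface $S\to C$ with $K_{S_{\text{reg}}}\sim \phi^*(L+(n-2)H)|_{S_{\text{reg}}}$ base point free, and then the canonical bundle formula $K_{S_{\text{reg}}}\sim \phi^*D'+\sum(m_i-1)F_i$ forces the torsion line bundle $\mathcal{O}_{F_i}((m_i-1)F_i)$ to have a section, hence to be trivial, so $m_i=1$. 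Note that this is exactly where the \emph{linear} equivalence $\phi^*L\sim K_X$ is spent; your argument never uses it beyond computing $K_F\sim 0$, which is a sign that something is missing, since multiple fibres genuinely occur for fibrations with $K_X$ only numerically $\phi$-trivial.

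Second, the extension of the one-form. Even granting isotriviality and the covers, the locus over which you trivialize the family is $\phi^{-1}(B^\circ)$, and its complement in $X$ need not have codimension two: $\phi$ can have fibres of dimension $>1$ over a codimension-$\geq 2$ subset of $B$, so $\operatorname{Exc}(\phi)$ can be a divisor in $X$. In that case the reflexive-extension theorem you invoke does not apply, and your contradiction evaporates. The paper disposes of this case separately: since $-K_X$ is $\phi$-nef, Kawamata's theorem on lengths of extremal rays shows that $\operatorname{Exc}(\phi)$ is covered by rational curves contracted by $\phi$, so if it is a divisor you are already done, and otherwise the bad locus really has codimension two and the cover extends to a finite quasi-\'etale cover of $X$ by Zariski's Main Theorem. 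You flagged the extension of the cover as delicate, but the divisorial exceptional locus is a case you must treat, not a technicality to be absorbed. (A smaller point: the minimal resolution of the cut-down surface need not be relatively minimal; one must observe, as the paper does, that any $(-1)$-curve contracted by the fibration but not by the resolution is itself a rational curve in $S$.)
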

An important consequence of this theorem is the case with numerically trivial canonical bundle. In this context we focus on Calabi--Yau varieties
	as in Definition \ref{calabiyau}.
\begin{theorem}\label{kbanale}
	Let $X$ be Calabi--Yau variety. Suppose that there exists a surjective morphism $\phi$ from $X$ to a variety $B$ of dimension $n-1$. Then there exists a subvariety of codimension one in $X$ that is covered by rational curves contracted by $\phi$.
	
\end{theorem}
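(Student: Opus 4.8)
The plan is to obtain Theorem~\ref{kbanale} as a direct application of Theorem~\ref{due}: I will verify that every hypothesis of the latter is satisfied by a Calabi--Yau variety and then invoke it. By Definition~\ref{calabiyau} such an $X$ is a normal projective variety with at most log terminal singularities and vanishing augmented irregularity, so the conditions on the singularities and on the irregularity hold by definition. The surjective morphism $\phi\colon X\to B$ onto an $(n-1)$-dimensional base is part of the hypothesis.

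The only condition of Theorem~\ref{due} that is not immediate is the existence of a Cartier divisor $L$ on $B$ with $\phi^{*}L\sim K_{X}$. For this I would use that a Calabi--Yau variety has trivial canonical bundle, $K_{X}\cong\mathcal{O}_{X}$, equivalently $K_{X}\sim 0$. Taking $L=0$, the trivial Cartier divisor on $B$, one gets $\phi^{*}L=\phi^{*}\mathcal{O}_{B}=\mathcal{O}_{X}\cong K_{X}$, so the required linear equivalence $\phi^{*}L\sim K_{X}$ holds. Theorem~\ref{due} then applies and produces a codimension-one subvariety of $X$ covered by rational curves contracted by $\phi$, which is precisely the conclusion of Theorem~\ref{kbanale}.

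The argument thus reduces entirely to the triviality of $K_{X}$, and the single point I would check carefully is whether Definition~\ref{calabiyau} guarantees $K_{X}$ to be linearly trivial and not merely numerically trivial. Should only $K_{X}\equiv 0$ be available, I would argue as follows. The general fiber $F$ of $\phi$ is a curve, and since $\phi^{*}K_{B}$ restricts trivially to a fiber, adjunction gives $K_{X}|_{F}=K_{F}$; as $K_{X}\equiv 0$ forces $\deg K_{F}=0$, the fiber has genus one and $K_{F}\cong\mathcal{O}_{F}$. Hence $K_{X}$ is trivial on the general fiber, and combining $\phi_{*}\mathcal{O}_{X}=\mathcal{O}_{B}$ with a seesaw argument one descends $K_{X}$ to a Cartier divisor $L$ on $B$, after accounting for the contributions of the special fibers. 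In the usual formulation of Calabi--Yau this obstacle does not appear, since $K_{X}\cong\mathcal{O}_{X}$ from the outset; this is therefore the only delicate point in the deduction.
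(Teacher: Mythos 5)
Your reduction to Theorem~\ref{due} is the right idea, but the way you supply the hypothesis $\phi^*L\sim K_X$ has a genuine gap. Definition~\ref{calabiyau} only asks that $K_X$ be \emph{numerically} trivial, so your primary route (take $L=0$ and use $K_X\sim 0$) is not available. Your fallback does not work either: adjunction does give $K_X|_F\cong K_F\cong\mathcal{O}_F$ on a general fiber, but the seesaw argument breaks down precisely at the special fibers, and their contribution is not a technicality that can be ``accounted for'' --- it is a genuine obstruction. A concrete counterexample is an Enriques surface $S$ with an elliptic pencil $\phi:S\to\mathbb{P}^1$: here $K_S\equiv 0$ and $\tilde{q}(S)=0$, so $S$ is a Calabi--Yau variety in the sense of Definition~\ref{calabiyau}, yet $K_S$ is nontrivial $2$-torsion (coming from the two multiple fibers in the canonical bundle formula), whereas every pullback $\phi^*L$ from $\mathbb{P}^1$ is an integer multiple of the fiber class, hence either linearly trivial or of nonzero degree against an ample divisor. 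So $K_S\not\sim\phi^*L$ for any Cartier divisor $L$ on $\mathbb{P}^1$, and Theorem~\ref{due} cannot be applied to $X$ itself.

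The paper circumvents this (proof of Theorem~\ref{canonic}) by first invoking the global index one theorem for klt varieties with numerically trivial canonical class: there is a finite quasi-\'etale cover $\alpha:X'\to X$ with canonical singularities and $K_{X'}\sim 0$. Since any finite quasi-\'etale cover of $X'$ is, after composing with $\alpha$, one of $X$, one still has $\tilde{q}(X')=0$; now $K_{X'}\sim 0$ is honestly the pullback of the trivial Cartier divisor along $\phi\circ\alpha$, so Theorem~\ref{due} applies to $X'$. The resulting uniruled divisor is then pushed forward by the finite map $\alpha$ to a codimension one subvariety of $X$ covered by rational curves contracted by $\phi$. This covering step is exactly what replaces your descent argument, and the Enriques example shows it is not optional.
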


In the case of varieties with trivial canonical bundle we study what happens in the Beauville--Bogomolov decomposition. In Section \ref{sectiondue} we study the case of a fibration onto a curve. In particular we prove the following result.
\begin{theorem}
	Let $X$ be a Calabi--Yau variety. Suppose there exists a nef $\mathbb{Q}$-divisor $D$ with numerical dimension one such that $c_2(X)\cdot D=0 $ in $N^3(X)$. Then $X$ does contain rational curves.
\end{theorem}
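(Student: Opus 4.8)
The plan is to argue by contradiction: assume that $X$ contains no rational curves and deduce that the augmented irregularity of $X$ cannot vanish. First I would turn $D$ into a fibration onto a curve. Since $D$ is nef with numerical dimension one on a klt variety with $K_X\equiv 0$, I would invoke semiampleness of $D$ (abundance is known in numerical dimension one) to obtain a fibration $f\colon X\to C$ onto a smooth curve, with connected fibres and $D\equiv f^{*}(\text{ample})$. Because $X$ is Calabi--Yau we have $q(X)=0$, and the inclusion $f^{*}\colon H^{0}(C,\Omega^{1}_{C})\hookrightarrow H^{0}(X,\Omega^{[1]}_{X})$ forces $g(C)=0$, so $C\cong\Proj^{1}$. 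The standing assumption that $X$ carries no rational curve guarantees in particular that no fibre of $f$ is rational and that $f$ is a genuine morphism.

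Next I would analyse the general fibre $F$. By adjunction $K_F=(K_X+F)|_F\equiv 0$, so $F$ is a normal projective $(n-1)$-fold with log terminal singularities and numerically trivial canonical class. Writing $D\equiv mF$ with $m>0$ and using that the normal bundle of a general fibre is trivial, so that $c_2(X)|_F=c_2(F)$, the hypothesis $c_2(X)\cdot D=0$ in $N^{3}(X)$ restricts to $c_2(F)\cdot(A|_F)^{\,n-3}=0$ for $A$ ample on $X$, i.e. $F$ has numerically trivial second Chern class against an ample polarisation. The equality case in the Miyaoka--Yau inequality --- equivalently, the flatness criterion for the tangent sheaf of a K-trivial klt variety --- then shows that $F$ admits an abelian variety as a quasi-\'etale cover; in particular $\tilde q(F)>0$.

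It remains to transport this positive irregularity from the fibres to $X$ itself. Since $X$ has no rational curve, the abelian fibration $f\colon X\to\Proj^{1}$ can have no singular fibre, because any genuine degeneration of an abelian variety contains rational curves; hence $f$ is smooth, and a smooth family of abelian varieties over the simply connected base $\Proj^{1}$ is isotrivial. From the resulting isotrivial abelian scheme structure one extracts a quasi-\'etale cover of $X$ dominating an abelian variety, so that $\tilde q(X)>0$, contradicting the Calabi--Yau hypothesis; therefore $X$ must contain a rational curve. I expect the main obstacle to be exactly this final step: one has to rule out singular fibres for \emph{higher-dimensional} abelian fibres (which needs the theory of semiabelian degenerations rather than the elementary geometry of elliptic surfaces) and then build an honest quasi-\'etale cover of $X$ carrying the extra holomorphic $1$-forms. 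Establishing semiampleness of $D$ in the first step is the other point where genuine input is required.
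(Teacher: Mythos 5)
Your first step contains a genuine gap, and it is precisely the difficulty the theorem is designed to circumvent. You assert that a nef $\mathbb{Q}$-divisor of numerical dimension one on a klt variety with $K_X\equiv 0$ is semiample because ``abundance is known in numerical dimension one''. It is not: semiampleness (indeed even nonvanishing) for such divisors is open in dimension $\geq 4$, and the paper flags this explicitly (``a divisor with numerical dimension one which intersects in zero the second Chern class of $X$ is just conjecturally semiample''). Without that step you have no fibration $f\colon X\to C$ at all, and the rest of your argument has nothing to act on. The paper's route is entirely different and never makes $D$ itself semiample: assuming no rational curves, the ample and big cones coincide (Proposition \ref{bigamplecone}), so for an ample $H$ the divisor $\overline{N}=H-\tfrac{H^n}{nH^{n-1}\cdot D}\,D$ lies on the null cone, is nef of numerical dimension $n-1$, and --- this is where the hypothesis $c_2(X)\cdot D=0$ enters, in a role completely different from the one you give it --- satisfies $c_2(X)\cdot\overline{N}^{\,n-2}=c_2(X)\cdot H^{n-2}>0$ by Lemma \ref{intersecitonchern} and Remark \ref{chernnonzero}. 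Koll\'ar's theorem on elliptic Calabi--Yau varieties (which needs exactly the positivity of $\operatorname{Td}_2(X)\cdot\overline{N}^{\,n-2}$) then shows that $\overline{N}$ induces a genus one fibration $X\to B$ with $\dim B=n-1$, and Theorem \ref{canonic} produces the rational curves. In short, the paper transports the problem to the ``dual'' fibration over an $(n-1)$-dimensional base, where semiampleness is actually available.

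Even granting a fibration over a curve, your endgame has a second gap that you yourself flag: the claim that any degeneration of an abelian variety (here, of a quasi-\'etale quotient of one) must contain rational curves is not established, and ruling out singular fibres plus extracting from an isotrivial family over $\mathbb{P}^1$ an honest quasi-\'etale cover of $X$ with positive irregularity is exactly the hard content of the fibered-over-a-curve approach even in the smooth case. Neither gap is cosmetic; as written the proposal does not prove the theorem.
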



	\subsubsection*{Acknowledgements} The author would like to thank Andreas H\"{o}ring, Edoardo Sernesi and Fabio Bernasconi for important
comments and discussions. The author would also like to thank his advisor, Simone Diverio, for the continuous help provided during this work and for climbing!

	\section{Genus one fibrations} 
	\subsection{Definitions and notations}
	In this paper every variety will be an irreducible projective variety over the complex number. The variety $X$ will be always normal and of dimension $n\geq 2$. If $X$ has a morphism to a variety $Y$, then $X_Z$ is the base change for $Z\rightarrow Y$. The notations and standard properties about singularities that are used in this article can be found for example in \cite{kollar2008birational}. For the reader's convenience we recall some definitions that will be used in this paper. 
\begin{definition}
	A morphism $f : X \rightarrow Y$ between normal
	varieties is called \emph{quasi-\'etale} if $f$ is quasi-finite and \'etale in codimension one.
\end{definition} 
\begin{remark}\label{purity}
	A quasi-\'etale morphism to a smooth variety is globally \'etale by standard argument on purity of the branch locus.
\end{remark}

\begin{definition}
	The \emph{irregularity} of a variety $Y$ is the non negative integer $q(Y):=h^1(Y,\mathcal{O}_Y)$. The \emph{augmented irregularity} of $Y$ is the following, not necessarily finite, positive integer
	$$\tilde{q}(Y):=\sup \{q(Z)\ | \ Z\rightarrow Y  \text{ is a finite quasi-\'etale cover} \}.$$
\end{definition}
For any variety the inequality $\tilde{q}(Y)\geq q(Y)$ holds. In general the equality does not hold. Moreover it may happen that this supremum is not achieved; this is exactly the case when the augmented irregularity is infinite. This can happen also in dimension one as we see in the following example.
\begin{example} \label{esempiocurva}
	The behaviour of the augmented irregularity for smooth curves is easy to describe using Riemann--Hurwitz formula. The augmented irregularity of a genus zero curve is zero. Indeed $\mathbb{P}^1$ is simply connected and it is regular. Any finite \'etale cover of a genus one curve is again a genus one curve by Riemann--Hurwitz formula, so $\tilde{q}(C)=1$. A curve $C$ with $g(C)\geq 2$ has a cover of degree $d$ from a curve $C'$ of genus $g(C')=d\cdot (g(C)-1)+1$. Indeed its fundamental group has subgroups of index $d$ arbitrary large. This subgroup corresponds to an \'etale cover $\tilde{C}$ of degree $d$, whose genus is given by Riemann--Hurwitz formula and equals $g(\tilde{C})=d(g-1)+1$.  So we can find an \'etale cover of $C$ with arbitrary large irregularity. Hence $\tilde{q}(C)=\infty$.
\end{example}

\begin{definition}
	A \emph{fibration} is a morphism between normal varieties with connected fibers. An \emph{genus one fibration} is a fibration such that the general fiber is a smooth genus one curve. An \emph{elliptic fibration} is a genus one fibration with a fixed section.
\end{definition}
\begin{definition}
	Let $f:X\rightarrow Y$ a surjective projective morphism of normal variety. The \emph{singular values} of $f$ is the following subset of $Y$
	$$\operatorname{Sv}(f)=\{y\in Y |\ \operatorname{dim}(f^{-1}(y))>\operatorname{dim}(X)-\operatorname{dim}(Y) \vee f^{-1}(y)\text{ is singular} \}.$$
\end{definition}
\begin{remark}
	The singular values of $f$ is the image of the singular locus of $f$. For the interested reader the definition of singular locus of a morphism can be found at the following link: \texttt{\url{http://stacks.math.columbia.edu/tag/01V5}}. We do not give the definition of singular locus of a morphism because the definition is too technical and we just need the given characterization of the image of the singular locus.
\end{remark}
One can associate to any elliptic curve a complex number called its $j$-invariant. This association is modular, which means that an elliptic fibration $f:Y\rightarrow B$ comes with a rational map $j:B \dashrightarrow \mathbb{P}^1 $ called $j$-map that is at least defined over the smooth values of $f$.
For some standard facts about the $j$-map of an elliptic fibration the references can be found in \cite{kodaira1963compact} or \cite{hartshorne2009deformation}.
\begin{remark}
	The $j$-invariant is well-defined also for genus one curve, \textsl{i.e.} without fixing the origin. Indeed different choices of the origin does not change the $j$-invariant because the translation of the origin is an automorphism of elliptic curves. In particular, as for elliptic curves, two genus one curves are isomorphic if and only if they have the same $j$-invariant.
	
	A genus one fibration $X\xrightarrow{f} B$ gives a rational map that we call the $j$-map as in the case of elliptic fibrations. To show this consider an open subset $U$ of $B$ contained in the smooth values of $f$ and in the regular part of $B$. Let $\Sigma$ be a general element in a very ample linear series on $X$ restricted over $U$. The pullback $\Sigma \times_B X \rightarrow \Sigma$ is a smooth elliptic fibration. Up to shrink $U$ out of the ramification of $\Sigma $ over $U$, we can suppose that $\Sigma \rightarrow U$ is a finite \'etale cover. If necessary we can consider a further finite \'etale cover $\Sigma '$ of $\Sigma $ such that the composition $\Sigma '\rightarrow U $ is Galois. Since $X\times_U \Sigma '\rightarrow \Sigma' $ is an elliptic fibration it is well-defined the $j$-function $ \Sigma'\rightarrow \C$. The regular functions on $U$ are the regular functions on $\Sigma' $ that are invariant under the Galois group. All the fibers in an orbit of the Galois group are isomorphic, so the $j$-map on $\Sigma' $ descends to a regular function on $U$, that is a rational map on $B$.
\end{remark}

\begin{remark}\label{twoiso}
	Consider the following two different definitions of isotriviality for a flat family. One can ask that two general fibers are isomorphic, or that the smooth fibers are isomorphic. In the general setting the first definition is strictly more general than the second one. An example of this situation is given by a degeneration of an Hirzebruch surface $F_n$ into an $F_m$ with $m> n$, \cite[See Example 1.2.11(iii)]{sernesi2007deformations}. For elliptic fibrations these two definitions coincide. Indeed a smooth degeneration of an elliptic curve is again elliptic by Kodaira's table \cite{barth2015compact}. Since the $j$-invariant is constant on a dense subset of the base it is constant. We can conclude that every smooth fiber is a smooth elliptic curve with the same $j$-invariant, so the smooth fibers are isomorphic. Since a smooth projective morphism \'etale-locally admits a section, the same statement holds for a smooth genus one fibration.
\end{remark}

	\subsection{Fischer--Grauert Theorem}\label{fisch}
	A well-known theorem proved by Fischer and Grauert \cite{fischer1965lokal} tells us that a proper holomorphic submersion with isomorphic fibers is locally a product in the complex topology. This means that given a proper holomorphic submersion $f:X\rightarrow B$ between complex manifolds such that for any $t,s\in B$ the fibers $X_t$ and $X_s $ are isomorphic, then for any $p\in B$ there exists a neighborhood $U_p\subset B$ open in complex topology such that the family $X_{U_p}\simeq X_p\times U_p$ splits in a product over the base. The same statement does not hold in the Zariski topology as we can see in the following example.
\begin{example}
	Let $f:X'\rightarrow X$ be any finite unramified (hence \'etale) morphism between varieties of degree $d>1$. For example $f$ can be a finite unramified morphism of degree $d$ from a smooth curve of genus $d(g-1)$ to a smooth curve of genus $g$. For any $p\in X$, the fiber over $p$ is a scheme given by $d$ distinct reduced points. In particular any two fibers are isomorphic. However for any $U\subseteq X$ open in the Zariski topology, the preimage $U':=f^{-1}(U)$ is a non-empty Zariski-open subset of $X'$. In particular since $U'$ is connected it is not isomorphic to the product between $d$ points and $U$ that has $d$ connected components.
\end{example}
For the general philosophy about the relation between complex topology and \'etale topology one can expect that the same statement of Fischer--Grauert Theorem holds for the \'etale topology. Since we were unable to find a neat reference on this subject, for the reader's convenience we prove some statements that will be useful for what follows. We hope to address a more careful analysis on this problem in a forthcoming paper.

\begin{proposition}\label{isocurve}
	Let $Y\rightarrow B$ a smooth proper morphism between normal quasi-projective varieties such that for any $t\in B$ the variety $Y_t$ is a smooth curve of genus $g\geq 1$. Suppose moreover that for any $s,t\in B$ the curves $Y_t$ and $Y_s$ are isomorphic. Then there exists a finite \'etale morphism $\tilde{B}\rightarrow B $ such that the pullback $Y_{\tilde{B}} \simeq Y_t\times\tilde{B}$ is a product.
\end{proposition}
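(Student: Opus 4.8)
The plan is to realize the desired cover as (a component of) an $\operatorname{Isom}$-scheme and to reduce the finiteness of that scheme to the finiteness of an automorphism group; this forces us to treat the genus one case separately. First I would fix $t_0\in B$, set $F:=Y_{t_0}$, and consider the functor $\operatorname{Isom}_B(F\times B,Y)$ of relative isomorphisms. This is representable by a scheme $I\to B$, separated and of finite type over $B$: the graph of an isomorphism is a closed subscheme of the projective $B$-scheme $(F\times B)\times_B Y$, so one invokes the standard representability of the $\Hom$/$\operatorname{Isom}$ functor for flat proper families (see e.g. \cite{hartshorne2009deformation}). The fibre of $I$ over $t$ is $\operatorname{Isom}(F,Y_t)$, which is non-empty by hypothesis, so $I\to B$ is surjective. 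When $g\geq 2$ this morphism is finite étale: it is quasi-finite, since $\operatorname{Isom}(F,Y_t)$ is a torsor under the finite group $\operatorname{Aut}(Y_t)$; it is unramified, indeed formally étale, because the infinitesimal automorphisms $H^0(Y_t,T_{Y_t})$ vanish ($\deg T_{Y_t}=2-2g<0$); and it is proper by the valuative criterion, because an isomorphism between two smooth proper relative curves of genus $\geq 1$ over the fraction field of a DVR extends over the DVR. Taking $\tilde B:=I$ (or a connected component), the tautological isomorphism over $I$ yields $Y_{\tilde B}\simeq F\times\tilde B$.

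The genus one case needs a preliminary rigidification, because now $H^0(Y_t,T_{Y_t})\neq 0$ and $\operatorname{Aut}(Y_t)$ is positive dimensional (translations), so $I\to B$ is no longer finite. Instead I would pass to the relative Jacobian $J:=\operatorname{Pic}^0_{Y/B}\to B$, an abelian scheme of relative dimension one, i.e. an elliptic fibration with its zero section; its fibres $J_t\cong Y_t$ are all isomorphic to a fixed elliptic curve $(E,O)$. Since the automorphism group of a pointed elliptic curve is finite, the argument of the previous paragraph, applied to isomorphisms of abelian schemes, produces a finite étale surjection $B_1\to B$ over which $J$ becomes the constant abelian scheme $E\times B_1$. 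Replacing $B$ by $B_1$, we may thus assume that $Y\to B$ is a torsor under $E_B:=E\times B$, classified by a class $\alpha\in H^1_{\mathrm{\acute et}}(B,E_B)$.

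It remains to kill $\alpha$ by a finite étale base change, and this is the heart of the matter: a finite flat multisection by itself only trivializes the torsor over a finite flat (possibly ramified) cover, and the point is to upgrade it to an étale one. I would argue cohomologically. A general relative hyperplane section of $Y\to B$ is a finite flat multisection $D\to B$ of some relative degree $d$, and the relative line bundle $\Oh_Y(D)$ defines a section of the torsor $\operatorname{Pic}^d_{Y/B}$, whose class is $d\alpha$ (as $Y\cong\operatorname{Pic}^1_{Y/B}$ has class $\alpha$); hence $d\alpha=0$. From the Kummer sequence $0\to E[d]\to E\xrightarrow{\,d\,}E\to 0$ of étale sheaves on $B$ (multiplication by $d$ is étale in characteristic zero) the long exact sequence gives that $H^1(B,E[d])\to H^1(B,E_B)[d]$ is surjective, so $\alpha$ lifts to a class in $H^1_{\mathrm{\acute et}}(B,E[d])$. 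Since $E[d]\cong(\Z/d\Z)^2$ is a finite étale group scheme, the associated $E[d]$-torsor is a finite étale cover $B_2\to B$, and pulling back kills $\alpha$; thus $Y_{B_2}\simeq E\times B_2\simeq Y_t\times B_2$. Composing $B_2\to B_1\to B$ (and using normality of $B$ to keep everything well behaved) yields the required $\tilde B$.

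The main obstacle is precisely this last step. For $g\geq 2$ the finiteness of $\operatorname{Aut}(Y_t)$ makes the $\operatorname{Isom}$-scheme itself do all the work, whereas for $g=1$ the translations destroy finiteness, so one must both rigidify via the Jacobian and then trivialize a torsor by a genuinely étale cover; this is what forces the passage through the $d$-torsion subgroup rather than a bare multisection.
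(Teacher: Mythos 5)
Your argument is correct in its essentials and takes a genuinely different route from the paper's. The paper argues transcendentally: it forms the monodromy representation $\pi_1(B,0)\to\operatorname{Aut}(H^1(Y_0,\Z_n))$ for $n\geq 3$, passes to the finite \'etale cover killing it, and observes that the resulting level structure makes the classifying map to a fine moduli space constant. You instead use the $\operatorname{Isom}$-scheme, which for $g\geq 2$ is itself the required finite \'etale cover, and for $g=1$ you first rigidify by the relative Jacobian and then trivialize the resulting $E$-torsor by lifting its class through the Kummer sequence, using a relative polarization to see it is $d$-torsion. Your case split is not a defect but a necessity, and it in fact repairs the weak point of the paper's uniform treatment: translations of an unpointed genus one curve act trivially on $H^1(C,\Z_n)$, so Serre's lemma and the existence of a fine moduli space fail precisely for $g=1$ --- which is why the paper supplies the separate Proposition \ref{isotriviale} for that case. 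Your genus-one branch is essentially a self-contained substitute for that proposition: where the paper trivializes \'etale-locally via a local multisection and then invokes Koll\'ar's Lemma \ref{lemma17}, you trivialize the torsor globally and cohomologically, which dispenses with the smoothness hypothesis on $B$ and with all appeals to GAGA and the topological fundamental group; the price is that you must quote representability of $\operatorname{Pic}^0_{Y/B}$ and the classification of torsors under smooth group schemes by \'etale cohomology. Two points deserve a word more of justification but are routine: properness of the $\operatorname{Isom}$-scheme follows from uniqueness of the smooth (equivalently, minimal regular) model of a curve of genus $\geq 1$ over a DVR, and in the abelian-scheme case from the N\'eron mapping property, while unramifiedness there rests on rigidity of homomorphisms of abelian schemes rather than on vanishing of $H^0(T_{Y_t})$ (which fails for $g=1$); and for the relation $d\alpha=0$ you do not need the multisection $D$ at all, only that the relatively ample line bundle $\mathcal{O}_Y(1)$ has some relative degree $d>0$ and hence defines a section of $\operatorname{Pic}^d_{Y/B}$.
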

\begin{proof}
	Fix a point $0\in B$. By GAGA's principle we can consider $B$ and $Y_0$ as complex manifolds, in this way we can study the monodromy around zero as follows. Fix an integer number $n$ greater than three and consider the action of the fundamental group of the base on the first cohomology group of the central fiber with coefficient in $\Z_n$ $$\phi:\pi_1(B,0)\rightarrow \operatorname{Aut} (H^1(Y_0, \Z_n)).$$
	Since $Y_0 $ is a complete curve of genus $g$ the group $H^1(Y_0, \Z_n)\simeq \Z _n^{2g}$ is finite. This implies that $\operatorname{Aut}(H^1(Y_0, \Z_n))$ is finite and hence $\ker(\phi)\unlhd \pi_1(B,0)$ is a normal subgroup of finite index of the fundamental group of the base. By the standard correspondence between subgroup of index $d$ of $\pi_1(B,0)$ and \'etale cover of $B$ of degree $d$, the subgroup $\ker(\phi)$ corresponds to a finite \'etale cover $\tilde{B}$ of $B$. Moreover the action of $\pi_1(\tilde{B},\tilde{0})$ is trivial on the first cohomology group with coefficients in $\Z_n$ of the pullback family $Y\times_B \tilde{B}$. This construction, called $J_n$-rigidification, is useful because for $n\geq 3$ there are no automorphisms of a curve with positive genus acting in a trivial way on $H^1(C, \Z_n)$. In particular there exists a fine moduli space with a universal family $\mathcal{U}_{g,n}\rightarrow \mathcal{M}_{g,n} $ (see for example \cite{beauville1996complex}). The classifying morphism $\tilde{B}\rightarrow \mathcal{M}_{g,n} $ is constant because the morphism  $B\rightarrow \mathcal{M}_{g} $ is constant (this morphism is constant since all the fibers are pairwise isomorphic). It follows that there is a pullback diagram as follows
		\[ \xymatrix{
		 Y\times_B \tilde{B}=\tilde{B}\times_{\mathcal{M}_{g,n} }\mathcal{U}_{g,n} \ar[d]\ar[r]  & \mathcal{U}_{g,n} \ar[d]  \\
		\tilde{B} \ar[r] & \mathcal{M}_{g,n} 
	}\]
and since the classifying morphism $\tilde{B}\rightarrow \mathcal{M}_{g,n} $ is constant, the variety $Y_{\tilde{B}}$ is isomorphic to the product $\tilde{B}\times Y_0$.
\end{proof}
In this paper we need the previous result only for genus one fibrations. Since the previous proof use many topological tools, we give another proof more algebraic in spirit of the following statement, that is essentially Proposition \ref{isocurve} for curves with genus one.
\begin{proposition}\label{isotriviale}
	Let $Y\rightarrow B$ a smooth projective morphism between normal varieties such that for any $t\in B$ the variety $Y_t$ is isomorphic to a fixed curve of genus $g=1$, \textsl{i.e.} a smooth isotrivial genus one fibration. Suppose moreover $B$ is smooth, then there exists a finite \'etale morphism $\tilde{B}\rightarrow B $ such that the pullback $Y_{\tilde{B}} \simeq Y_t\times\tilde{B}$ is a product.
\end{proposition}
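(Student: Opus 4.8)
The plan is to split the statement into two trivialization problems and solve each by passing to a finite \'etale cover: first trivialize the relative Jacobian, which is an isotrivial \emph{elliptic} fibration, and then kill the torsor class that measures the failure of $Y\to B$ to have a section. Write $Y_t\cong E$ for the fixed fibre and set $E_0:=\operatorname{Jac}(E)$, an elliptic curve with the same $j$-invariant $j_0:=j(E)$. Since $Y\to B$ is smooth and projective with geometrically integral genus one fibres, the relative Picard scheme is available, and $J:=\operatorname{Pic}^0_{Y/B}\to B$ is a smooth proper group scheme of relative dimension one (an abelian scheme) all of whose fibres are isomorphic to $E_0$; moreover the Abel--Jacobi map identifies $Y\cong\operatorname{Pic}^1_{Y/B}$ as a torsor under $J$. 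I would also fix once and for all a line bundle on $Y$ that is relatively ample of relative degree $d>0$; this will be used to make the torsor class torsion.

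To trivialize $J$, fix an integer $N\geq 3$ divisible by $d$. As we work over $\C$, the $N$-torsion $J[N]\to B$ is a finite \'etale group scheme with geometric fibres $(\Z/N)^2$, so the scheme $\operatorname{Isom}_B\big((\Z/N)^2_B,\,J[N]\big)$ of full level-$N$ structures is finite \'etale over $B$, being a torsor under the finite group $GL_2(\Z/N)$. Let $B_1$ be a connected component of it; since $B$ is normal and irreducible, $B_1\to B$ is finite \'etale and surjective, and $J_{B_1}$ carries a tautological level-$N$ structure. Because $N\geq 3$ rigidifies the moduli problem, there is a fine moduli scheme $Y(N)$ of elliptic curves with level-$N$ structure together with a classifying morphism $B_1\to Y(N)$. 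Composing with the forgetful map $Y(N)\to\A^1_j$ and using that the $j$-invariant is constant by isotriviality, the image of $B_1$ lands in the finite fibre over $j_0$; as $B_1$ is connected this morphism is constant, so the universal property gives $J_{B_1}\cong E_0\times B_1$.

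It remains to kill the torsor. The relatively ample bundle fixed above defines a section of $\operatorname{Pic}^d_{Y/B}$, so if $\alpha\in H^1(B_1,E_0)$ denotes the class of the $J_{B_1}$-torsor $Y_{B_1}\cong\operatorname{Pic}^1$, then $d\alpha=\big[\operatorname{Pic}^d_{Y_{B_1}/B_1}\big]=0$. Since $d\mid N$, the finite group scheme $E_0[d]\subseteq E_0[N]$ is constant over $B_1$, and the Kummer sequence of \'etale sheaves
\[
0\to E_0[d]\to E_0\xrightarrow{\ [d]\ }E_0\to 0
\]
together with $d\alpha=0$ produces a lift of $\alpha$ to a class $\beta\in H^1\big(B_1,(\Z/d)^2\big)$. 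This $\beta$ is represented by a finite \'etale abelian cover $\tilde B\to B_1$ with group $(\Z/d)^2$, over which $\beta$, and hence $\alpha$, becomes trivial. Thus $Y_{\tilde B}\to\tilde B$ is a trivial torsor under $J_{\tilde B}=E_0\times\tilde B$, and the resulting section identifies it with $\operatorname{Pic}^0_{Y_{\tilde B}/\tilde B}=E_0\times\tilde B\cong Y_t\times\tilde B$. Composing $\tilde B\to B_1\to B$ yields the desired finite \'etale cover.

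The routine inputs (existence and basic properties of the relative Picard scheme, the finiteness and \'etaleness of $J[N]$ in characteristic zero, and the fine moduli scheme $Y(N)$ for $N\geq 3$) I would simply quote from the standard references. The main obstacle is the last paragraph: the torsor class lives a priori in the non-finite group $H^1(B_1,E_0)$, and one must see that it is nonetheless annihilated by a finite \'etale cover. The two points that make this work are that a relatively ample bundle forces $\alpha$ to be $d$-torsion, and that such a torsion class is pulled back from the finite \'etale group scheme $E_0[d]$, whose torsors are precisely finite \'etale abelian covers.
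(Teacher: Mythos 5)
Your proof is correct, but it takes a genuinely different route from the paper's. The paper first produces, near each point of $B$, a local \'etale multisection of $f$ (using the local structure theorem for smooth morphisms), thereby reducing to an isotrivial \emph{elliptic} fibration to which it applies Hartshorne's result \cite[Corollary 26.5]{hartshorne2009deformation}; this shows $f$ is \'etale locally trivial, and it then invokes Koll\'ar's general globalization result \cite[Lemma 17]{kollar2009quotients} (for \'etale locally trivial families whose fiber has numerically trivial canonical bundle) to obtain the finite \'etale cover trivializing the family. You instead work directly with the relative Jacobian $J=\operatorname{Pic}^0_{Y/B}$: a level-$N$ structure ($N\geq 3$) plus constancy of the $j$-invariant trivializes $J$ after a finite \'etale cover via the fine moduli scheme, and the torsor class $\alpha=[\operatorname{Pic}^1_{Y/B}]$ is killed by observing that a relatively ample bundle of relative degree $d$ makes $d\alpha=0$, so the Kummer sequence lifts $\alpha$ to $H^1(B_1,(\Z/d)^2)$, whose trivializing cover is finite \'etale. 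All the steps check out (over $\C$ one does have $Y_t\cong\operatorname{Jac}(Y_t)$, the Isom scheme of level structures is finite \'etale, and the relative Picard scheme is representable and proper in this setting). Your argument is specific to genus one but is more self-contained, makes the structure of the cover explicit (a level cover followed by an abelian $(\Z/d)^2$-cover), and does not actually use the smoothness of $B$ — only normality and connectedness — whereas the paper's route needs $B$ smooth for Koll\'ar's lemma but generalizes more readily to other fiber types with numerically trivial canonical bundle.
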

To prove this proposition we need two results. 
\begin{lemma}\label{lemmahart}
	If $f:Y\rightarrow B$ is a smooth isotrivial elliptic fibration, then there exists a finite \'etale map $\tilde{B}\rightarrow B$ such that the pullback family $Y\times_{B}\tilde{B}$ is
	isomorphic to the trivial family.
\end{lemma}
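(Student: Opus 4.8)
The plan is to regard the isotrivial elliptic fibration $f\colon Y\to B$ as an \'etale \emph{form} of the trivial family $E\times B$, where $E$ denotes the fixed fiber, and to trivialize it by passing to the \'etale cover that splits the associated automorphism torsor. The point is that, over $\C$, the automorphism group $G:=\operatorname{Aut}(E,0)$ of the pointed elliptic curve $E$ is a \emph{finite} cyclic group (of order $2$, $4$ or $6$ according to the value of $j(E)$). Once one knows that $Y$ is \'etale-locally isomorphic to $E\times B$ as a family of elliptic curves, the sheaf $\sI:=\underline{\operatorname{Isom}}_B(E\times B,\,Y)$ of fiberwise origin-preserving isomorphisms is a torsor under the constant group scheme $G_B$; being a torsor under a finite group, it is represented by a scheme $\tilde B\to B$ that is finite and \'etale, and the tautological section of $\sI$ over $\tilde B$ furnishes an isomorphism $Y\times_B\tilde B\simeq E\times\tilde B$. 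This gives exactly the asserted trivialization.

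The crux, therefore, is the \'etale-local triviality of $Y$, and this is the step I expect to be the main obstacle, since the hypothesis only provides fiberwise, not local, information. I would establish it using level structures. Because $f$ has a section $\sigma$, the morphism $Y\to B$ is an abelian scheme of relative dimension one; fix an integer $n\geq 3$, so that the relative $n$-torsion $Y[n]\to B$ is finite \'etale of degree $n^2$. The monodromy of $Y[n]$ is classified by a homomorphism from the \'etale fundamental group of $B$ to the finite group $\operatorname{GL}_2(\Z/n)$; equivalently $\underline{\operatorname{Isom}}_B\big((\Z/n)^2_B,\,Y[n]\big)$ is a torsor under this finite group, hence finite \'etale over $B$. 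Replacing $B$ by a connected component $\tilde B$ of this torsor (which is again finite \'etale over $B$, as $B$ is irreducible), we obtain a tautological full level-$n$ structure $\alpha$ on $Y_{\tilde B}$.

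Now the triple $(Y_{\tilde B},\sigma,\alpha)$ defines a classifying morphism $g\colon \tilde B\to Y(n)$ to the fine moduli scheme of elliptic curves with full level-$n$ structure, which exists precisely because for $n\geq3$ such structures are rigid, i.e.\ they kill all nontrivial automorphisms. The composite of $g$ with the finite forgetful map $Y(n)\to \A^1_j$ is constant equal to $j(E)$, since the fibration is isotrivial; as $\tilde B$ is irreducible and maps into the finite fiber of $Y(n)\to\A^1_j$ over $j(E)$, the morphism $g$ is itself constant. Pulling back the universal family $\sE$ along a constant map yields $Y_{\tilde B}\simeq g^\ast\sE\simeq E\times\tilde B$, which simultaneously proves the \'etale-local triviality and already exhibits the finite \'etale cover trivializing $Y$, completing the argument. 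The only points requiring care are the finiteness and rigidity inputs for $n\geq3$, the constancy of $g$ (which rests on the finiteness of the $j$-map together with the irreducibility of $\tilde B$), and the bookkeeping needed to pass from the possibly reducible torsor to a single connected finite \'etale cover.
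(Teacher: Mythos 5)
Your argument is correct, and it is worth noting at the outset that the paper does not actually prove this lemma: it is quoted verbatim from \cite[Corollary 26.5]{hartshorne2009deformation}, so there is no in-paper proof to compare against line by line. The closest internal comparison is the paper's proof of Proposition \ref{isocurve}, and your proof is essentially the algebraic incarnation of the same idea: your torsor $\underline{\operatorname{Isom}}_B\big((\Z/n)^2_B,\,Y[n]\big)$ kills exactly the monodromy representation $\pi_1(B,0)\to\operatorname{Aut}(H^1(Y_0,\Z_n))$ used there (for an elliptic curve $H^1(Y_0,\Z_n)$ is dual to $Y_0[n]$, so trivializing one is trivializing the other), and the remaining steps --- rigidity of level-$n$ structures for $n\geq 3$, existence of a fine moduli scheme with universal family, constancy of the classifying map because $j$ is constant and the forgetful map to the $j$-line is finite --- match the paper's argument for Proposition \ref{isocurve} step for step. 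Three small remarks. First, your opening paragraph (the reduction to an $\operatorname{Aut}(E,0)$-torsor via \'etale-local triviality) is logically superfluous: as you yourself observe, the level-structure argument already produces the global trivialization over $\tilde B$ directly, so the first paragraph can be deleted. Second, a ``full level-$n$ structure'' is usually required to be symplectic for the Weil pairing; an arbitrary basis of $Y[n]$ gives a torsor under $\operatorname{GL}_2(\Z/n)$ whose classifying space is a disjoint union of copies of $Y(n)$ indexed by primitive $n$-th roots of unity. Over $\C$ this is still a fine moduli scheme, finite over the $j$-line, so your argument goes through unchanged, but the bookkeeping deserves a sentence. Third, connectedness of $\tilde B$ (rather than irreducibility) already suffices for the constancy of $g$, since the fiber of $Y(n)\to\A^1_j$ over $j(E)$ is a finite discrete set; this is worth phrasing carefully if one does not want to assume $B$ normal.
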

This result is \cite[Corollary 26.5]{hartshorne2009deformation}. The big difference between Proposition \ref{isotriviale} and Lemma \ref{lemmahart} is that in the lemma the family of genus one curves has a section. So we have to combine this result with the following.
\begin{lemma}\label{lemma17}
	Let $f:Y \rightarrow B$ be a projective morphism between normal varieties. Assume that $B$ is smooth and $f$ is \'etale locally trivial and the generic fiber $F$ has numerically trivial canonical bundle. Then there is a finite \'etale cover $B'\rightarrow B$ such that the pull back $Y_{B'}\simeq F\times B'$ is globally trivial.
\end{lemma}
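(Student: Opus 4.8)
The plan is to regard $f:Y\to B$ as a fiber bundle with fiber $F$ and structure group the automorphism group scheme $\operatorname{Aut}(F)$, and to trivialize it in two stages: first reduce the structure group to its identity component, then kill the resulting torsion class by a Kummer covering. The starting algebraic input is the consequence of $K_F\equiv 0$: since $K_F$ is numerically trivial, hence pseudo-effective, $F$ is not uniruled, so $\operatorname{Aut}(F)$ can contain no copy of $\G_a$ or $\G_m$ (a nontrivial action of either would sweep out rational curves through a general point). By Chevalley's structure theorem the identity component $A:=\operatorname{Aut}^0(F)$ is therefore an abelian variety. Because $f$ is \'etale locally trivial and $F$ is projective (so $\operatorname{Aut}(F)$ is representable), the family is the twist of the constant family $F\times B$ by a class $\alpha\in H^1_{\text{\'et}}(B,\operatorname{Aut}(F))$: the transition isomorphisms over the overlaps of an \'etale trivializing cover form a $1$-cocycle.

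Next I would reduce the structure group to $A$. From the exact sequence $1\to A\to\operatorname{Aut}(F)\to\Gamma\to 1$, with $\Gamma=\operatorname{Aut}(F)/A$ discrete, the class $\alpha$ projects to $\bar\alpha\in H^1_{\text{\'et}}(B,\Gamma)$, that is, to the monodromy $\pi_1(B)\to\Gamma$ recording how the components of $\operatorname{Aut}(F)$ are permuted along $B$. Trivializing $\bar\alpha$ by a finite \'etale cover $B_1\to B$ reduces the cocycle to a class $\alpha_1\in H^1_{\text{\'et}}(B_1,A)$. In the case of interest, $F$ a smooth genus one curve, $\Gamma$ is finite (the automorphisms of a genus one curve modulo translations), so this step is immediate; its extension to arbitrary $F$ with $K_F\equiv0$ requires knowing that the component-group monodromy of a projective isotrivial family has finite image, a secondary delicate point.

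Finally, with $\alpha_1\in H^1_{\text{\'et}}(B_1,A)$ and $A$ an abelian variety, I would use projectivity of $f$ to prove that $\alpha_1$ is torsion. In the genus one case $A=\operatorname{Aut}^0(F)=F$ acts on $F$ by translations, simply transitively, so $Y_{B_1}$ is exactly the $A$-torsor determined by $\alpha_1$; a relatively ample $\mathcal{L}$ yields a multisection of degree $d=\deg(\mathcal{L}|_F)$, and after the finite cover of $B_1$ that splits this multisection the family acquires a section, whence the torsor becomes trivial and a transfer (norm) argument along the cover gives $d\,\alpha_1=0$. For general $F$ one instead invokes the principle that the torsors with projective, hence K\"ahler, total space constitute the torsion subgroup of the analytic Tate--Shafarevich group $H^1(B_1,\mathcal{A})$ (where $\mathcal{A}$ is the sheaf of holomorphic maps to $A$), the continuous part living in $H^1(B_1,\mathcal{O})\otimes\operatorname{Lie}(A)$ being obstructed on a K\"ahler total space. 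Once $N\alpha_1=0$, the Kummer sequence $0\to A[N]\to A\xrightarrow{[N]}A\to 0$ writes $\alpha_1$ as the image of a class in $H^1_{\text{\'et}}(B_1,A[N])$; since $A[N]$ is a finite \'etale group scheme this class is trivialized by pulling back along the finite \'etale cover $B'\to B_1$ that represents it. Then $Y_{B'}\simeq F\times B'$, and $B'\to B_1\to B$ is the required finite \'etale cover.

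The main obstacle is precisely this last torsion statement: extracting from projectivity of $f$ that the classifying class $\alpha_1$ is torsion. The elementary multisection-plus-transfer argument wants an \'etale multisection, whereas a complete-intersection multisection may ramify, so reconciling the two — equivalently, making rigorous that a projective total space forces the Tate--Shafarevich class into the torsion subgroup — is the technical heart of the proof. By contrast, the reduction to the identity component and the final Kummer trivialization are formal once this torsion is in hand.
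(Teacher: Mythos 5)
First, note that the paper does not prove this lemma at all: it is quoted verbatim from Koll\'ar--Larsen and justified only by the citation \cite[Lemma 17]{kollar2009quotients}, so there is no internal argument to compare yours against. Your torsor-theoretic strategy (classify the family by a class in $H^1_{\text{\'et}}$ of the automorphism sheaf, reduce to the identity component $A=\operatorname{Aut}^0(F)$, which is an abelian variety because $F$ is not uniruled, then kill a torsion class by a Kummer cover) is the natural route and, in the genus one case --- the only case this paper actually uses --- it can be pushed through. But as a proof of the lemma as stated it has two genuine gaps, both of which you flag and neither of which you close. (1) For the reduction to $A$ you need the monodromy image in $\Gamma=\operatorname{Aut}(F)/\operatorname{Aut}^0(F)$ to be \emph{finite}, and $\Gamma$ itself is infinite for many $F$ with $K_F\equiv 0$ (abelian varieties, some hyperk\"ahler manifolds). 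The missing ingredient is that the monodromy preserves the restriction to $F$ of a relatively ample class, and the group of automorphisms fixing an ample class has only finitely many components (Fujiki--Lieberman / Matsusaka--Mumford); without this the cover $B_1\to B$ need not be finite. (2) The torsion of $\alpha_1$ is asserted, not proved. Note that the relevant coefficient sheaf is the sheaf of \emph{morphisms} to $A$, not the constant sheaf, and for general $F$ the group $A$ does not act simply transitively, so $Y_{B_1}$ is not itself an $A$-torsor and the ``projective total space $\Rightarrow$ torsion Tate--Shafarevich class'' principle is being applied to an auxiliary torsor (a component of the Isom-scheme modulo $A$) whose projectivity is not established.

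In the genus one case the difficulty you single out --- that a complete-intersection multisection may be ramified, so the transfer argument is awkward --- is avoidable: a relatively ample line bundle of fibre degree $d$ gives a section of $\operatorname{Pic}^d$ of the family, and the canonical sum map identifies $\operatorname{Pic}^d$ with the torsor of class $d\alpha_1$, so $d\alpha_1=0$ with no cover and no \'etaleness needed. With that substitution, steps (1) (where $\Gamma$ is already finite for a genus one curve) and the Kummer trivialization complete the argument for the application in Proposition \ref{isotriviale}. For the lemma in the stated generality, however, what you have is a correct proof strategy with its ``technical heart'' left open, and one should either supply the two missing arguments above or simply cite \cite[Lemma 17]{kollar2009quotients} as the paper does.
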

This lemma is stated and proved in \cite[Lemma 17]{kollar2009quotients}. Finally we can give an algebraic proof of Proposition \ref{isotriviale}.
\begin{proof}[Proof of \ref{isotriviale}]
	We have to prove that $f$ is \'etale locally trivial, \textsl{i.e.} for any $p\in B$ there exists an \'etale neighborhood $U$ of $p$ such that $Y_U\simeq U\times Y_p$. Choose a point $p\in B$. The morphism is smooth and projective so locally around $p$ there exists a multi-section $\Sigma$ of $f$ that is \'etale at $p$. Indeed the local structure of smooth morphism can be described in the following way: for any point $y\in Y$ and $t=f(y)$ there exist open neighborhood $V_t$ and $U_y$ with $U_y\subset f^{-1}(V_t)$ such that $f|_{U_t}$ factorizes as an \'etale morphism $g:U_y\rightarrow \mathbb{A}^d_{V_t}$ followed by the canonical projection $\mathbb{A}_{V_t}^d\rightarrow V_t$. Consider a section $s$ of $\mathbb{A}_{V_t}^d\rightarrow V_t$ and the associated fiber product $U\times_{\mathbb{A}^d_V}s(V_t)$. The image of this fiber product in $U_y$ is the desired \'etale multi-section.
	Shrinking $\Sigma $ we can suppose that the fiber product $Y_{\Sigma}\rightarrow \Sigma $ is a family of smooth elliptic curves and the fibers are pairwise isomorphic, so by Lemma \ref{lemmahart} $Y_{\Sigma} \simeq \Sigma \times Y_p$. This proves that $f$ is \'etale locally trivial. We can apply Lemma \ref{lemma17} and the proof is completed.
\end{proof}
	\subsection{Proof of Theorem \ref{due}}
	Before start proving Theorem \ref{due} we need a lemma that is essentially stated in \cite{diverio2016rational}, but with some more extra care in the case the fibration has no sections.
	
\begin{lemma}
	\label{isocod}
	Let $X\overset{\pi}{\rightarrow} B$ be a genus one fibration. If the subvariety of singular values $Z:=\operatorname{Sv}(\pi)$ has codimension at least two then the family $\pi $ is isotrivial.
\end{lemma}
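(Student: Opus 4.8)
The plan is to exploit the $j$-map of the fibration together with a Hartogs-type extension argument. Set $U := B \setminus Z$, the locus of smooth values of $\pi$; by the definition of $\operatorname{Sv}(\pi)$, every fiber over $U$ is a smooth genus one curve. As recalled in the discussion of the $j$-map, a genus one fibration carries a rational map $j : B \dashrightarrow \Proj^1$ that is regular on $U$; moreover, since the fibers over $U$ are smooth, their $j$-invariant is a finite complex number, so $j|_U$ factors through $\A^1 \subset \Proj^1$ and may be viewed as a regular function $j|_U \in \Oh_B(U)$.

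First I would extend this function across $Z$. Because $B$ is normal and $Z$ has codimension at least two, the restriction map $\Oh_B(B) \to \Oh_B(U)$ is an isomorphism: this is the algebraic Hartogs lemma, a direct consequence of the fact that normal varieties satisfy Serre's condition $S_2$, so that regular functions extend uniquely over closed subsets of codimension at least two. Hence $j|_U$ extends to a global morphism $\bar{\jmath} : B \to \A^1$.

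Next I would invoke completeness. Since $B$ is an irreducible projective variety, every global regular function on it is constant; thus $\bar{\jmath}$, and in particular $j|_U$, is constant. Constancy of the $j$-invariant over $U$ means that all smooth fibers of $\pi$ share the same $j$-invariant, and two genus one curves with equal $j$-invariant are isomorphic (as noted in Remark~\ref{twoiso} and the preceding discussion of the $j$-invariant for genus one curves). Therefore all smooth fibers of $\pi$ are isomorphic, which is precisely the isotriviality of the family in the sense of Remark~\ref{twoiso}.

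The delicate points, where I would spend the most care, are twofold. One must be sure that the $j$-map is genuinely \emph{regular} (not merely rational) on the whole smooth locus $U$ and takes values in $\A^1$ there; this is guaranteed by the construction of the $j$-map via a finite \'etale cover on which the fibration acquires a section, followed by Galois descent to $U$. One must also keep track of the fact that the codimension hypothesis on $Z$ is used exactly at the extension step: were $Z$ to contain a divisorial component, the function $j|_U$ could genuinely acquire a pole, or take the value $\infty$, along that component, and the extension-to-a-constant argument would fail — indeed this is precisely where non-isotrivial examples live.
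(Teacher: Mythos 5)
Your proposal is correct and follows essentially the same route as the paper: restrict the $j$-map to the smooth values, observe it lands in $\A^1$, extend it over the codimension-two complement using normality, and conclude constancy from projectivity. The only cosmetic difference is that the paper works on $Z^c\cap B_{\mathrm{reg}}$ (removing also $B_{\mathrm{sing}}$, which is harmless since it too has codimension at least two by normality), a detail worth making explicit since the $j$-map is constructed over the regular part of $B$.
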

\begin{proof} 
	Since $B$ is normal it is smooth in codimension one and also the subvariety $Z\cup B_{\text{sing}}$ has codimension at least two. We denote $B_0:=Z^c\cap B_{\text{reg}}$. The $j$-map $B\dashrightarrow \mathbb{P}^1$ is well-defined on $B_0$. Moreover the image of $B_0$ under this map is contained in $\mathbb{A}^1_\C $. Since $B$ is normal and $(B_0)^c$ has codimension at least two, this map extends to a holomorphic function $j:B\rightarrow \C$. This function must be constant because $B$ is projective and this means that $\pi $ is isotrivial.
\end{proof}

	For the reader's convenience we state again the theorem that we are going to prove.

\begin{theorem}\label{duee}
	Let $X$ be a projective variety with at most log terminal singularities and $\tilde{q}(X)=0$. Suppose that there exists a surjective morphism $\phi:X\rightarrow B$ to a variety of dimension $n-1$. If there exists a Cartier divisor $L$ on $B$ such that $\phi^* L \sim  K_X$, then there exists a subvariety of codimension one in $X$ that is covered by rational curves contracted by $\phi$.
\end{theorem}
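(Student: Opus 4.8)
The plan is to read off from $\phi^*L\sim K_X$ that $\phi$ is a genus one fibration, and then to oppose the geometry of its degenerate fibres to the constraint $\tilde q(X)=0$. First I would verify that the general fibre $F$ of $\phi$ is a smooth curve of genus one. As $X$ is log terminal it is normal, so $X_{\mathrm{sing}}$ has codimension at least two and cannot dominate the $(n-1)$-dimensional base; hence $F$ avoids $X_{\mathrm{sing}}$ and is smooth by generic smoothness. Because $F$ is a general fibre over a point $b\in B$, the differential of $\phi$ identifies $N_{F/X}$ with $\phi^*T_{B,b}\otimes\Oh_F\cong\Oh_F^{\oplus(n-1)}$, and $(\phi^*L)|_F\cong\Oh_F$ since $F$ is contracted to the point $b$ and $L$ is Cartier. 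Adjunction then gives $\omega_F\cong K_X|_F\otimes\det N_{F/X}\cong\Oh_F$, so $F$ has genus one and $\phi$ is a genus one fibration.

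Next I would rule out multiple fibres, and this is the first place where the integrality of $L$ is essential. Over the generic point of a prime divisor $D\subseteq B$ the fibre is one-dimensional (a fibre of dimension at least two over a divisor would force $\phi^{-1}(D)=X$), and if it were a multiple $mF^{\mathrm{red}}$ of a smooth elliptic curve with $m>1$, then $\Oh_{F^{\mathrm{red}}}(F^{\mathrm{red}})$ would be a torsion class of order exactly $m$; on the other hand $K_X\sim\phi^*L$ restricts trivially to $F^{\mathrm{red}}$, since $F^{\mathrm{red}}$ is contracted by $\phi$, so the adjunction relation $\omega_{F^{\mathrm{red}}}\cong K_X|_{F^{\mathrm{red}}}\otimes\Oh_{F^{\mathrm{red}}}(F^{\mathrm{red}})$ forces $\Oh_{F^{\mathrm{red}}}(F^{\mathrm{red}})\cong\Oh_{F^{\mathrm{red}}}$ and hence $m=1$. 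Equivalently one sees this from the canonical bundle formula, whose multiple-fibre terms $\tfrac{m_i-1}{m_i}D_i$ cannot appear if $K_X$ is the pullback of a Cartier divisor. Thus every fibre over a codimension one point is reduced. Now set $Z:=\operatorname{Sv}(\phi)$ and split according to Lemma \ref{isocod}.

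If $Z$ has a codimension one component $D$, then over the generic point of $D$ the fibre is a reduced, singular, one-dimensional curve of arithmetic genus one, the arithmetic genus being constant in the flat family $\phi$. Such a curve must contain a rational component: a connected reduced curve of arithmetic genus one that is either singular or reducible cannot be built only from smooth components of genus at least one, so either one component is rational or the normalisation of an irreducible component drops the genus to zero. Letting the point move over $D$, these rational components sweep out a subvariety of dimension $\dim D+1=n-1$ that is covered by rational curves, all of them contracted by $\phi$. This is the desired codimension one subvariety.

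It remains to show that the alternative $\operatorname{codim}_B Z\geq 2$ cannot occur, and this is where $\tilde q(X)=0$ enters; I expect this to be the main obstacle. By Lemma \ref{isocod} the fibration is then isotrivial, and over $B_0:=B_{\mathrm{reg}}\setminus Z$, whose complement has codimension at least two, the morphism $\phi_0\colon X_0\to B_0$ is a smooth isotrivial genus one fibration. Proposition \ref{isotriviale} furnishes a finite \'etale cover $\tilde B_0\to B_0$ with $X_0\times_{B_0}\tilde B_0\cong E\times\tilde B_0$ for a fixed elliptic curve $E$. Since $B$ is normal and $B\setminus B_0$ has codimension at least two, purity of the branch locus extends $\tilde B_0\to B_0$ to a finite cover $\tilde B\to B$ \'etale in codimension one; normalising $X\times_B\tilde B$ yields a finite quasi-\'etale cover $\tilde X\to X$ which is birational to $E\times\tilde B$. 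As log terminal singularities are rational, $q(\tilde X)$ equals the irregularity of a resolution, which is at least $q(E)=1$; therefore $\tilde q(X)\geq q(\tilde X)\geq 1$, contradicting the hypothesis. The genuinely delicate points are the exclusion of multiple fibres in the higher-dimensional, possibly singular setting, and this last step, where one must both extend the trivialisation across a codimension two locus and convert it into an honest lower bound on the augmented irregularity.
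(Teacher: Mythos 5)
Your overall architecture matches the paper's: show $\phi$ is a genus one fibration, split on the codimension of $Z=\operatorname{Sv}(\phi)$, use isotriviality plus $\tilde q(X)=0$ to exclude $\cod_B Z\geq 2$, and extract rational components from degenerate fibres otherwise. Your final irregularity computation (normalize the base change, pass to a resolution, use birational invariance of $q$ and rationality of log terminal singularities) is a reasonable, slightly more elementary substitute for the paper's use of $\overline{H^0(Y,\Omega^{[1]}_Y)}\simeq H^1(Y,\Oh_Y)$. However, there are two genuine gaps. The first is in the isotrivial case: your cover $\tilde X\to X$ is only known to be \'etale over $X_0=\phi^{-1}(B_0)$, so it is quasi-\'etale only if $\phi^{-1}(Z)$ has codimension $\geq 2$ in $X$. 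Even with $\cod_B Z\geq 2$, the preimage $\phi^{-1}(Z)$ may contain a divisor --- precisely when $\operatorname{Exc}(\phi)$ is a divisor --- and then $\tilde X\to X$ may ramify along it, so you cannot conclude $\tilde q(X)\geq 1$. The paper disposes of this first: since $K_X\sim\phi^*L$, the anticanonical divisor is $\phi$-nef, so by Kawamata's theorem on the length of extremal rays $\operatorname{Exc}(\phi)$ is covered by rational curves contracted by $\phi$; if it is a divisor the theorem is already proved, and otherwise $\cod_X\phi^{-1}(Z)\geq 2$ and the quasi-\'etale argument is legitimate. Some version of this step is unavoidable.

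The second gap is your exclusion of multiple fibres, which is carried out directly on $X$ and does not work as written for $n\geq 3$: the fibre $F^{\mathrm{red}}$ over a point of a divisor $D\subset B$ is a curve, not a divisor in $X$, so the adjunction formula $\omega_{F^{\mathrm{red}}}\cong K_X|_{F^{\mathrm{red}}}\otimes\Oh_{F^{\mathrm{red}}}(F^{\mathrm{red}})$ has no meaning, and the assertion that $\Oh_{F^{\mathrm{red}}}(F^{\mathrm{red}})$ is torsion of exact order $m$ is a theorem about relatively minimal elliptic \emph{surfaces}. Relatedly, your dichotomy for degenerate fibres (reduced of arithmetic genus one, or $mF^{\mathrm{red}}$ with $F^{\mathrm{red}}$ smooth elliptic) is incomplete: a non-reduced fibre need not be a multiple of a smooth curve, $X$ may be singular along it, and the flatness/constancy of arithmetic genus you invoke needs justification over a higher-dimensional base. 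The paper resolves all of this at once by cutting with $n-2$ general members of $|H|$, with $H$ very ample and $L+(n-2)H$ globally generated, to obtain a surface $S\to C$ that necessarily acquires singular fibres; after a minimal resolution, Kodaira's table shows the only fibres without rational components are multiples of smooth genus one curves, and these are excluded because $K_{S_{\mathrm{reg}}}\sim\phi^*(L+(n-2)H)|_{S_{\mathrm{reg}}}$ is base point free while its restriction to a multiple fibre is the nontrivial torsion bundle $\Oh_{F_i}((m_i-1)F_i)$ (with a separate argument, via minimality of the resolution, when the singular fibre meets the singular points of $S$). Your argument needs this reduction to a surface to be salvageable; the canonical-bundle-formula remark you offer as an alternative is not obviously sufficient either, since the moduli and discriminant parts could a priori absorb the fractional multiple-fibre terms.
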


\begin{proof}
	The proof is divided in several steps some of which might be already known to the experts. In particular Step 1, Step 3 and Step 5 adapt arguments used in \cite{diverio2016rational}.

	$Step\  1:$ the morphism $\phi:X\rightarrow B$ is a genus one fibration. We can suppose, by taking the normalization of $B$ and passing to Stein factorization, that the morphism $\phi $ has connected fibers and the base $B$ is normal. For dimensional reasons the generic fiber is a curve. Since $X$ is normal $X_{\text{sing}}\subset X$ has codimension at least two, so $\phi(X_{\text{sing}})\subset B$ has positive codimension. The restriction on the regular part of $X$ is a morphism from a smooth variety, so there is a non-empty open subset $U\subset B$ where the morphism $\phi:X\cap \phi^{-1}(U) \rightarrow U$ is smooth \cite[III.10.7]{hartshorne2013algebraic}. Let $Z\subset B$ be the union of the singular locus of $B$ and the singular values of $\phi$, \textsl{i.e.} $Z:=B_{\text{sing}}\cup \operatorname{Sv}(\phi)$. Now $B_0:=Z^c$ and $B_0\cap \phi(X_{\text{sing}})^c$ are non-empty open sets and the morphism  $\phi_0:X_0:=\phi^{-1}(B_0)\rightarrow B_0$ is a smooth proper surjective morphism. Taking the determinant of the relative cotangent bundle sequence restricted to a fiber that is in the regular part of $X$, we get the isomorphism $K_E\sim K_{X_0}|_E$. It follows that $ K_E \sim  K_{X_0}|_E \sim \phi^* L |_E\sim \mathcal{O}_E$. A smooth curve with trivial canonical bundle is a genus one curve and a smooth degeneration of a genus one curve has again genus one \cite[See Section V.7]{barth2015compact}, so every fiber of $\phi_0:X_0\rightarrow B_0$ is a smooth genus one curve.

	$Step \ 2$: we reduce to the case where the subvariety $Z$ has codimension one in $B$. Suppose every irreducible component of $Z$ has codimension at least two. By Lemma \ref{isocod} the family $\phi$ is isotrivial, so by Proposition \ref{isocurve} or \ref{isotriviale} there exists a variety $C_0 $ and a finite \'{e}tale cover $C_0\overset{\tau}{\rightarrow} B_0 $ such that the pullback $C_0\times_{B_0}X_0$ is globally trivial, \textsl{i.e.} $C_0\times_{B_0}X_0\overset{\psi}{\cong} C_0\times E$. The morphism induced by the following diagram 

	\[ \xymatrix{
	C_0\times E \ar[r]^{\psi^{-1}}  & X_0 \times_{B_0} C_0 \ar[d] \ar[r]^{\tau '}& X_0 \ar[d]  \\
	& C_0 \ar[r]^\tau & B_0 
}\]
given by the composition $\alpha_0:=\psi^{-1}\circ\tau ':C_0\times E \rightarrow X_0$ 
	is finite \'{e}tale because $\tau$ is the pullback of a finite \'{e}tale morphism. In particular the composition of the morphisms $C_0\times E \overset{\alpha_0}{\rightarrow}X_0\overset{i}{\rightarrow} X$ is quasi-finite and \'etale. By Zariski's Main Theorem \cite{grothendieck1967elements} a quasi-finite morphism is always the composition of an open immersion and a finite morphism, so there is a commutative diagram
	\[ \xymatrix{
		C_0\times E \ar[r]^{\alpha_0} \ar[d]^{i'}  & X_0 \ar[d]^i  \\
		 Y \ar[r]^\alpha & X 
	}\]
	where $i'$ is an open immersion and $\alpha$ is a finite morphism. The exceptional locus of $\phi $ is by definition $\operatorname{Exc}(\phi) = \{x \in X \ | \  \dim (\phi^{-1}(\phi(x))) > 1 \}$. The variety $X_0^c=X_Z$ can be splitted as a disjoint union $$X_Z=(X_Z\cap \operatorname{Exc}(\phi))\sqcup (X_Z\cap \operatorname{Exc}(\phi)^c) .$$
	Since the subvariety $ X_Z\cap \operatorname{Exc}(\phi)^c$ has dimension at most $\dime(Z)+1$ and we are assuming that $\cod _B(Z)\geq 2 $, the dimension of $X_Z$ is bounded by $\dime(X_Z\cap \operatorname{Exc}(\phi)^c)\leq n-2$. Since $K_X\sim \phi^*(L)$ the anticanonical bundle $-K_X$ is $\phi$-nef, hence by \cite[Theorem 2]{kawamata1991length} the $\phi$-exceptional locus is covered by rational curves contracted by $\phi$ (in the cited reference Kawamata didn't say explicitly that the rational curves are contracted by $\phi$, however this is clear from his proof).  This implies that if the exceptional locus of $\phi$ has codimension one in $X$, it is a uniruled subvariety of codimension one of $X$. This allows us to assume that $\cod_X(X_Z)\geq 2$.
	
	Since $\alpha $ is finite, also $i'(C_0\times E )^c $ has codimension at least two in $Y$. In particular since $\alpha$ is \'{e}tale in $i'(C_0\times E)\subset Y$, this argument proves that $\alpha$ is a finite quasi-\'{e}tale cover of $X$, so by hypothesis $H^1(Y, \mathcal{O}_Y)=0$. By \cite[Proposition 5.20]{kollar2008birational} $Y$ has log terminal singularities. As proved in \cite[Proposition 6.9]{greb2011singular} there is an isomorphism $\overline{H^0(Y, \Omega^{[1]}_Y)}\simeq H^1(Y, \mathcal{O}_Y)$. 
	By definition $\Omega^{[1]}_Y$ is a reflexive sheaf, so it is isomorphic to the sheaf of one forms on the regular part. The variety $C_0 $ is smooth because it is a finite \'{e}tale cover of $B_0$, so $\Omega^{[1]}_Y=i'_*\Omega^1_{C_0\times E}$. Since 
	$$0=\overline{H^1(Y, \mathcal{O}_Y)}\simeq H^0(Y, \Omega^{[1]}_Y)\simeq H^0(C_0\times E, \Omega^1_{C_0\times E})=$$
	$$=H^0(C_0,\Omega^1_{C_0})\oplus H^0(E, \Omega^1_E) \neq 0$$
	we reach a contradiction, so if there are no uniruled divisors on $X$ then $Z$ has codimension one in $B$.

	$Step\ 3$: restriction to a fibration onto a curve with some singular values. Let $H$ be a very ample divisor on $B$ such that $(n-2)H+L$ is globally generated. The pullback $\phi^*H$ is a globally generated Cartier divisor. Moreover there is an isomorphism 
	$$H^0(X,\phi^*H)\simeq H^0(B,\phi_*(\phi^*H))\simeq H^0(B,H)$$
	because $\phi $ has connected fibers. This isomorphism implies that general elements in $|H|$ are general also in $|\phi^*(H)|$. So we can choose $n-2$ general divisors $D_1, \ldots , D_{n-2} \in |H|$ such that $C:=D_1\cap \ldots \cap D_{n-2}$ is a smooth irreducible curve in $B_{\text{reg}}$ not contained in Z and $S:=\phi^{-1}(D_1)\cap \ldots \cap \phi^{-1}(D_{n-2})$ is a normal surface. We call again $\phi$ the morphism $\phi|_{S}$. Since $Z$ has codimension one, it must intersect $C$. Indeed $Z\cdot C=Z\cdot D_1\cdot \ldots \cdot D_{n-2}=Z\cdot H^{n-2}>0$ because $H$ is ample in $B$. This means that $\phi$ must have some singular fiber. To prove the existence of a uniruled divisor in $X$ it is sufficient to find a rational curve in the general $S$.
	
	$Step\ 4$: the case where $\phi^{-1}(p_i)\cap  \text{sing(S)}\neq \emptyset$. Let $\overline{S}$ be a minimal resolution of $S$
	\[ \xymatrix{
		\overline{S} \ar[rd]^{\beta} \ar[d]^\nu \\
		S \ar[r]^\phi & C. 
	}\]
	We can assume $\beta$ is relatively minimal. Indeed if there is a $(-1)$-curve on $\overline{S}$ contracted by $\beta$, the image of such a curve is again a rational curve in $S$ because it cannot be contracted to a point by minimality of the resolution. If there are $(-1)$-curves in the general surface $S$ constructed above, then the union of such rational curves cover a divisor of $X$.
	Let $p_1,\ldots, p_k$ be the points of $C\cap Z$. The singular curves $\phi^{-1}(p_i)\subset S$ are exactly $\phi^{-1}(p_i)=\nu(\beta^{-1}(p_i))$.
	Since $\beta$ is a minimal genus one fibration, by Kodaira's table \cite[Section V.7]{barth2015compact} a fiber of $\beta$ can be a smooth genus one curve, a sum of (possibly non reduced) rational curves or a non reduced genus one curve. 
	If $\phi^{-1}(p_i)$ contains some singular point of $S$, then $\beta^{-1}(p_i)=\nu^{-1}(\phi^{-1}(p_i) )$ contains an exceptional divisor of $\nu$, in particular $\beta^{-1}(p_i)$ must be a sum of rational curves. Since not every rational curve of $\beta^{-1}(p_i)$ can be contracted in $S$, the curve $\phi^{-1}(p_i)=\nu(\beta^{-1}(p_i))$ must be a sum of rational curves in $S$.
	
	$Step\ 5$: the case where $\phi^{-1}(p_i)\subset S_{\text{reg}}$. The curve $\phi^{-1}(p_i)$ is the central fiber of a family $S_0\overset{\phi}{\rightarrow} \Delta$ of genus one curves. Since $\phi^{-1}(p_i)$ is not smooth, by Kodaira's table it is a rational curve or a non reduced irreducible genus one curve. We need to exclude the last possibility.

	By adjunction formula the canonical bundle of $S_{\text{reg}}$ is base point free. Indeed 
	$$ K_{S_{\text{reg}}}\sim ( K_X+(n-2)\phi^*H )|_{S_{\text{reg}}}\sim \phi^*(L+(n-2)H)|_{S_{\text{reg}}} $$
	the canonical bundle is the restriction of the pullback of a base point free divisor.

	By \cite[V.12.3]{barth2015compact} the canonical bundle of $S_{\text{reg}}$ can be computed using the formula
	$$K_{S_{\text{reg}}}\sim \phi^*D +\sum (m_i-1)F_i $$
	for some divisor $D$ on the base and the sum runs over all the multiple fiber $F_i$ with multiplicity $m_i$. The restriction of the canonical bundle of $S_{\text{reg}}$ to $F_i$ is base point free because $K_{S_{\text{reg}}}$ is base point free. By the above formula for any $i$ the canonical bundle restricted to $F_i $ is
	$$K_{S_{\text{reg}}}|_{F_i}\sim (\phi^*D +\sum (m_i-1)F_i)|_{F_i}=\mathcal{O}_{F_i}((m_i-1)F_i).$$
	that in particular has some sections because it is the restriction of a base point free line bundle.
	The line bundle $\mathcal{O}_F{_{i}}(F_i)$ is torsion of order $m_i$ by \cite[Lemma III.8.3]{barth2015compact}. A non-trivial torsion line bundle has no sections, so for any $i$ the line bundle $\mathcal{O}_{F_i}((m_i-1)F_i)$ must be trivial, hence the multiplicity of the fiber $m_i$ is one for any $i$ and this is a contradiction. 
\end{proof}

	Theorem \ref{due} is inspired by \cite{diverio2016rational} where they proved a similar result in the case $X$ is a smooth projective manifold with finite fundamental group.
	\begin{remark}\label{fundamentalgroup}		
		For a smooth projective variety $Y$ with finite fundamental group the augmented irregularity is trivial. Indeed a quasi-\'etale cover is an \'etale cover for purity of branch locus. The fundamental group of an \'etale cover $\tilde{Y}$ of $Y$ is a subgroup of the fundamental group of $Y$, so it is finite. The first Betti number of a variety with finite fundamental group is zero, so by Hodge theory also $H^1(\tilde{Y}, \mathcal{O}_{\tilde{Y}})=0$, and hence $\tilde{q}(Y)=0$.
	\end{remark}
	This remark implies that Theorem \ref{duee} is stronger than \cite[Theorem 1.1]{diverio2016rational} also for smooth varieties. 
	An interesting application of Theorem \ref{duee} is the following corollary, already observed in their context in \cite{diverio2016rational}.
	\begin{corollary}
		A variety $X$ with at most klt singularities, $\tilde{q}(X)=0$, $\kappa(X)=n-1$ and whose canonical bundle is nef of exponent one does contain a uniruled divisor.
	\end{corollary}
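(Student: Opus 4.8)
The plan is to deduce the statement from Theorem~\ref{duee} by exhibiting $K_X$ as the pullback, under an Iitaka-type fibration, of a Cartier divisor on a base of dimension $n-1$. The klt hypothesis in particular makes $X$ log terminal, and $\tilde q(X)=0$ is assumed, so the only two ingredients to produce are a surjective morphism $\phi\colon X\to B$ with $\dim B=n-1$ and a Cartier divisor $L$ on $B$ satisfying $\phi^*L\sim K_X$.

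First I would check that the numerical dimension $\nu(K_X)$ coincides with the Kodaira dimension. Since $K_X$ is nef, it is big precisely when $\nu(K_X)=n$, and a nef and big canonical bundle forces $\kappa(X)=n$; as $\kappa(X)=n-1$ this is excluded, so $\nu(K_X)\le n-1$. Combining this with the general inequality $\kappa(X)\le\nu(K_X)$ yields $\nu(K_X)=n-1=\kappa(X)$. Thus $K_X$ is an abundant nef divisor, and by Kawamata's theorem that a nef and abundant canonical bundle on a minimal variety is semiample, $K_X$ is semiample. I would then let $\phi\colon X\to B$ be the morphism associated to the semiample divisor $K_X$, i.e. the Iitaka fibration: it is surjective with connected fibers onto a normal projective variety $B$ of dimension $\kappa(X)=n-1$, and there exist an ample Cartier divisor $H$ on $B$ and an integer $m>0$ with $mK_X\sim\phi^*H$.

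At this point the hypothesis that $K_X$ is \emph{of exponent one} enters: it says precisely that one may take $m=1$, that is, $K_X$ itself descends to a Cartier divisor $L$ on $B$ with $\phi^*L\sim K_X$ (equivalently, the smallest positive integer for which a multiple of $K_X$ is the pullback of a Cartier divisor is $1$). With this morphism $\phi$ and this $L$, every hypothesis of Theorem~\ref{duee} is met, and the theorem produces a subvariety of codimension one in $X$ covered by rational curves contracted by $\phi$; such a subvariety is exactly a uniruled divisor, which is the desired conclusion.

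The part I expect to carry the real content, beyond bookkeeping, is the passage from numerical data to an honest morphism together with an \emph{integral} pullback relation. Semiampleness is what upgrades the numerical identity $K_X\equiv\phi^*(H/m)$ to the existence of the fibration $\phi$ and a $\mathbb{Q}$-linear equivalence $K_X\sim_{\mathbb{Q}}\phi^*(\text{ample})$, and this is secured by the equality $\nu(K_X)=\kappa(X)$ established above. The sharpening to an integral Cartier $L$ with $\phi^*L\sim K_X$, rather than merely a $\mathbb{Q}$-linear equivalence to a pullback, is exactly the exponent-one hypothesis, and it is this integral statement that is needed to match the assumption $\phi^*L\sim K_X$ of Theorem~\ref{duee}.
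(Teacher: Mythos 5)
Your proof is correct and follows exactly the route the paper intends: the corollary is presented as a direct application of Theorem~\ref{duee}, obtained by observing that $\kappa(X)=n-1$ together with nefness forces $\nu(K_X)=\kappa(X)=n-1$, so $K_X$ is nef and abundant, hence semiample by Kawamata's theorem for klt varieties; the semiample (Iitaka) fibration $\phi\colon X\to B$ then has $(n-1)$-dimensional base, and the exponent-one hypothesis supplies the Cartier divisor $L$ with $\phi^*L\sim K_X$ needed to invoke the theorem. There is nothing to add.
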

	We conclude this section with an example where one can apply Theorem \ref{duee}.

	\begin{example}
	Fix two integer numbers $r\geq 1$ and $d\geq 2$. Consider a smooth hypersurface $X_{3,r}\subset \mathbb{P}^2\times \mathbb{P}^d$ given by the zero locus of a bihomogeneous polynomial of bedegree $(3,r)$. Consider the natural projection $\pi:X_{3,r}\rightarrow \mathbb{P}^d$. The augmented irregularity of $X_{3,r}$ is zero because it is simply connected by Lefschetz hyperplane theorem. By Grothendieck--Lefschetz Theorem the Picard group of $X_{3,r}$ is isomorphic to $\operatorname{Pic}(\mathbb{P}^2\times \mathbb{P}^d)$ and by adjunction formula the canonical bundle is $K_{X_{3,r}}\sim \mathcal{O}_{X_{3,r}}(0, r-d-1)$. In particular $K_{X_{3,r}}\sim \pi^* \mathcal{O}_{\mathbb{P}^d}(r-d-1)$. So we can apply Theorem \ref{duee}: it follows that this kind of family of genus one curves can't be everywhere smooth but it degenerates over a divisor of the base in (singular) rational curves.
\end{example}
	\subsection{Trivial canonical bundle}
	The following is a possible definition of Calabi--Yau variety that we will use in this paper.
\begin{definition}\label{calabiyau}
	A \emph{Calabi--Yau variety} $X$ is a projective variety with at most log terminal singularities, $K_X\equiv 0$ and $\tilde{q}(X)=0$.
\end{definition}
 For Calabi--Yau varieties we can improve Theorem \ref{duee} proving the following result.
\begin{theorem}\label{canonic}
	Let $X$ be Calabi--Yau variety. Suppose that there exists a morphism $\phi:X\rightarrow B$ whose general fiber is a curve. Then there exists a uniruled subvariety of codimension one in $X$ that is covered by rational curve contracted by $\phi$.
\end{theorem}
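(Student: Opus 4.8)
The plan is to reduce Theorem \ref{canonic} to Theorem \ref{duee} by passing to the index-one cover, on which the canonical bundle becomes genuinely trivial and hence a pullback from the base. First I would exploit the hypothesis $K_X\equiv 0$ together with log terminality: by the abundance theorem in the case of numerical dimension zero one knows that $K_X$ is torsion, that is, there is an integer $m>0$ with $mK_X\sim\Oh_X$. Indeed abundance gives $\kappa(X)=0$, so some $mK_X$ is linearly equivalent to an effective divisor $D$; since $D\equiv 0$ and a nonzero effective divisor cannot be numerically trivial, $D=0$ and $mK_X\sim\Oh_X$. This torsion class lets me form the associated cyclic (index-one) cover $\sigma\colon\tilde X\to X$, the relative spectrum of $\bigoplus_{i=0}^{m-1}\Oh_X(-iK_X)$.

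Next I would record the properties of this cover and check that it remains a Calabi--Yau variety. On the smooth locus $X_{\mathrm{reg}}$ the divisor $K_X$ is a torsion line bundle, so the cyclic cover is a $\mu_m$-torsor there and is \'etale; as $X$ is normal, $X_{\mathrm{reg}}$ has complement of codimension at least two, and therefore $\sigma$ is quasi-\'etale. By \cite[Proposition 5.20]{kollar2008birational} the variety $\tilde X$ is again log terminal, and by construction of the index-one cover $K_{\tilde X}\sim\Oh_{\tilde X}$, so in particular $K_{\tilde X}\equiv 0$. Moreover $\tilde q(\tilde X)=0$, since any finite quasi-\'etale cover of $\tilde X$ is, through $\sigma$, a finite quasi-\'etale cover of $X$, and $\tilde q(X)=0$. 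Hence $\tilde X$ is a Calabi--Yau variety in the sense of Definition \ref{calabiyau}, now with \emph{trivial} canonical bundle.

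I would then transport the fibration and apply the earlier theorem. Taking the Stein factorization of $\phi\circ\sigma\colon\tilde X\to B$ gives $\tilde X\xrightarrow{\tilde\phi}\tilde B\to B$ with $\tilde\phi$ a fibration onto a normal variety $\tilde B$; because $\sigma$ is finite, the general fibre of $\tilde\phi$ is again a curve, so $\dim\tilde B=n-1$. Taking $L=\Oh_{\tilde B}$ we have $\tilde\phi^*L=\Oh_{\tilde X}\sim K_{\tilde X}$, so all the hypotheses of Theorem \ref{duee} are satisfied, and it produces a codimension-one subvariety $\tilde W\subset\tilde X$ covered by rational curves contracted by $\tilde\phi$. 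Finally I would descend to $X$ by setting $W:=\sigma(\tilde W)$: since $\sigma$ is finite, $W$ has codimension one in $X$, and $\sigma$ maps the covering rational curves of $\tilde W$ to rational curves covering $W$ (the image of a rational curve under a nonconstant finite morphism is rational). A curve contracted by $\tilde\phi$ maps to a point of $\tilde B$, hence to a point of $B$ under $\tilde B\to B$; as $\phi\circ\sigma$ factors through $\tilde B$, its image is contracted by $\phi$. Thus $W$ is the desired uniruled divisor.

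The \textbf{main obstacle} is concentrated in the first two steps: one must invoke abundance (torsion of $K_X$) to build the index-one cover, and one must verify that this cover does not leave the Calabi--Yau class, i.e.\ that log terminality, vanishing augmented irregularity, and triviality of the canonical bundle all survive. Once $K$ has been made genuinely trivial the remaining argument is purely formal, since the triviality $K_{\tilde X}\sim\tilde\phi^*\Oh_{\tilde B}$ is exactly the pullback hypothesis required by Theorem \ref{duee}.
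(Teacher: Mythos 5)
Your proposal is correct and follows essentially the same route as the paper: pass to a finite quasi-\'etale cover on which $K$ becomes genuinely trivial (hence the pullback of a trivial Cartier divisor on the base of the Stein factorization), check that log terminality, $K\equiv 0$ and $\tilde q=0$ survive, apply Theorem \ref{duee}, and push the uniruled divisor back down. The only difference is presentational: where you reconstruct the index-one cover by hand (abundance in numerical dimension zero plus the cyclic covering trick of \cite[Proposition 5.20]{kollar2008birational}), the paper simply cites the global index-one theorem \cite[Proposition 2.18]{greb2017klt}, which packages exactly that argument.
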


\begin{proof}
	By global index one theorem \cite[Proposition 2.18]{greb2017klt} there is a variety $X'$ with canonical singularities and a finite quasi-\'{e}tale morphism 
	$ \alpha: X' \rightarrow X $ such that $K_{X'}\sim 0 $. A finite quasi-\'{e}tale cover $Y\rightarrow X'$ is also (after composition with $\alpha $) a finite quasi-\'{e}tale cover of $X$. This proves that $\tilde{q}(X')\leq \tilde{q}(X)$ and so $\tilde{q}(X')=0$. If there is a subvariety $V\subset X'$ of dimension $n-1$ that is covered by rational curves, then also the variety $\alpha(V)\subset X$ is covered by rational curves. Since the canonical bundle of $X'$ is linearly equivalent to the trivial line bundle it is automatically the pullback of the trivial line bundle. The hypotheses of Theorem \ref{duee} are satisfied, so the theorem is proved.
\end{proof}
	To preserve the dichotomy given by Beauville--Bogomolov decomposition between irreducible symplectic varieties and Calabi--Yau varieties in the singular setting, a useful definition is given for example in \cite{greb2017klt}, \cite{druel2018decomposition}, \cite{horing2018algebraic} and related papers. In particular, in \cite{horing2018algebraic}, they prove that there exists a version of the Beauville--Bogomolov decomposition for varieties with canonical singularities and smooth in codimension two. In these definitions of Calabi--Yau varieties and irreducible symplectic varieties there are some conditions on the reflexive exterior algebra of forms, that in particular imply that such varieties must have vanishing augmented irregularity. 

Being a Calabi--Yau variety as in Definition \ref{calabiyau} means that in the Beauville--Bogomolov decomposition \cite[Theorem 1.5]{horing2018algebraic} the abelian factor is trivial. Without the assumption on the smoothness in codimension two an analogous statement is \cite[Theorem B]{greb2017klt}. 
In particular Theorem \ref{canonic} can be applied to any product of Calabi--Yau's and irreducible symplectic varieties with such a definition. 
But let us be more precise. 

Let $X$ be a variety with at most log terminal singularities. Suppose moreover $K_X\equiv 0$ and that there is a surjective morphism $\phi:X\rightarrow B$ to a variety of dimension $n-1$. By \cite[Theorem B]{greb2017klt} there is a quasi-\'etale map $f:A\times Y\rightarrow X$ with $A$ an abelian variety of dimension $\tilde{q}(X)$, and $\tilde{q}(Y)=0$. Passing through the Stein factorization of $\phi \circ f$ we get a genus one fibration $\alpha:A\times Y\rightarrow \tilde{B}$. If the restriction of $\alpha$ to $\{ t \}\times Y$ for generic $t$ is a family of curves, then we can apply Theorem \ref{canonic} and find a uniruled divisor on $\{ t \}\times Y$. This implies that there is also a uniruled divisor on $A\times Y$ and hence its image under $f$ is again a uniruled subvariety of codimension one in $X$. 
\begin{remark}
	We can't expect that we can always apply Theorem \ref{canonic} to the restriction of the fibration to $\{ t \}\times \tilde{X}$ because it may happen that $\alpha$ is a projection, \textsl{i.e.} $X=E\times Y\rightarrow Y$ for some genus one curve $E$.
\end{remark}
Theorem \ref{canonic} is a generalization of \cite[Corollary 1.2]{diverio2016rational}. Also for smooth varieties, Theorem \ref{canonic} seems more general than their result because in \cite{diverio2016rational} a Calabi--Yau variety must have finite fundamental group. Such finiteness condition is a priori stronger than the vanishing of the augmented irregularity (see Remark \ref{fundamentalgroup}). However one can see as consequences of Beauville--Bogomolov decomposition for smooth varieties that this two conditions are equivalent: a smooth projective variety with numerically trivial canonical bundle and vanishing augmented irregularity has finite fundamental group. It is conjectured that the same implication holds also in the singular case, at least for varieties with mild singularities.

\section{Fibration over curves}\label{sectiondue}
	In this section we study the dual case of a genus one fibration: the case of a surjective morphism $\pi:X\rightarrow C$ to a curve. Passing through the Stein factorization we can assume $\pi$ has connected fibers and since $X$ is normal we can assume that $C$ is smooth. So it is sufficient to study the geometry of a morphism with connected fibers onto a smooth curve. 
A fiber of a morphism onto a curve is a semiample divisor with numerical dimension one. So it is a priori more general to work only with a nef divisor with numerical dimension one than with a fibration onto a curve. 

For the reader's convenience we recall the definition of numerical dimension of a nef divisor.
\begin{definition}\label{calabi}
	Let $Y$ be a normal variety. The \emph{numerical dimension} of a nef class $x\in N^1(Y)$ is the maximum integer $k$ such that $x^k\neq 0 $ as element in $N^k(X)$.
\end{definition}
In this section we prove the following statement.
\begin{theorem}\label{newovercurves}
	Let $X$ be a Calabi--Yau variety. Suppose there exists a nef $\mathbb{Q}$-divisor $D$ with numerical dimension one such that $c_2(X)\cdot D=0 $ in $N^3(X)$. Then $X$ does contain rational curves.
\end{theorem}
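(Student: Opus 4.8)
The plan is to argue by contradiction: assume that $X$ contains no rational curves and derive a contradiction with the hypothesis $\tilde{q}(X)=0$. The strategy has three movements. First, realise $D$ (up to a positive multiple) as a fibre class of a fibration $f\colon X\to C$ over a smooth curve. Second, use the hypothesis $c_2(X)\cdot D=0$ to force the general fibre to be a torus quotient. Third, invoke the Fischer--Grauert type machinery of Section \ref{fisch} to trivialise the family after a finite \'etale base change, thereby producing a quasi-\'etale cover of $X$ of positive irregularity, which is the contradiction.

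First I would produce the fibration. Since $D$ is nef of numerical dimension one, the aim is to show that $D$ is semiample, so that $|mD|$ for suitable $m$ defines a morphism onto a normal curve which, after Stein factorisation and normalisation, is a fibration $f\colon X\to C$ with $C$ smooth and $D\equiv f^{*}A$ for an ample class $A$ on $C$; equivalently $D$ is numerically a positive multiple of a fibre $F$. Every fibre then has pure dimension $n-1$, and by adjunction together with $K_X\equiv 0$ the general fibre $F$ is a normal variety with $K_F\equiv 0$ and at worst klt singularities. I expect this to be the first delicate point: semiampleness of a nef divisor of numerical dimension one on a Calabi--Yau variety is an abundance-type statement, and I would try to extract it precisely from the standing assumption that $X$ carries no rational curves, so that the relevant contractions and base-point-free arguments are unobstructed.

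Next I would exploit the Chern class hypothesis. For a general fibre the normal bundle $N_{F/X}$ is trivial, so $c_2(X)|_{F}=c_2(F)$; since $D$ is numerically proportional to $F$, the vanishing $c_2(X)\cdot D=0$ in $N^{3}(X)$ translates into the numerical vanishing of $c_2(F)$ on $F$. Combined with $K_F\equiv 0$ and the klt hypothesis, the characterisation of torus quotients by the vanishing of the second Chern class forces a finite quasi-\'etale cover of $F$ to be an abelian variety. Now comes the closing argument. Under the no-rational-curves assumption every fibre of $f$ must be smooth, for a degenerate fibre of a family of abelian-type varieties with $K_X\equiv 0$ necessarily contains rational curves; this is the higher-dimensional analogue of the appearance of rational curves in the singular fibres of a genus one fibration treated in Steps $4$ and $5$ of the proof of Theorem \ref{duee}. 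Hence $f$ is a smooth, and therefore isotrivial, family of abelian-type varieties: smoothness forces the Hodge bundle to have degree zero against $C$ once $K_X\equiv 0$ is imposed, which by the Arakelov-type positivity rules out variation. Applying the Fischer--Grauert machinery, namely Lemma \ref{lemma17} together with the isotriviality argument behind Proposition \ref{isotriviale}, I would find a finite \'etale cover $\tilde{C}\to C$ with $X_{\tilde C}\cong F\times \tilde C$. Composing with the quasi-\'etale cover trivialising the torus quotient $F$ yields a quasi-\'etale cover of $X$ of irregularity at least $n-1>0$, contradicting $\tilde{q}(X)=0$ and showing that $X$ must contain rational curves.

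I expect the two genuinely hard points to be the following. The first is the existence of the fibration, i.e.\ the semiampleness of $D$, which I would try to derive from the absence of rational curves rather than from a general abundance statement. The second, and probably the main obstacle, is the claim that a degenerate fibre of the abelian-type family necessarily carries rational curves: in the genus one case this is supplied cleanly by Kodaira's classification of the singular fibres, whereas in dimension $n-1$ one has no such list and must argue directly that a non-smooth fibre of such a family of torus quotients with $K_X\equiv 0$ is covered in part by rational curves.
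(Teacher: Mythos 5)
Your strategy runs in the ``dual'' direction to the paper's and it founders at its very first step. You propose to make $D$ itself semiample, so as to obtain a fibration $f\colon X\to C$ over a curve with $D$ proportional to a fibre. But $D$ is nef of numerical dimension one on a variety with $K_X\equiv 0$, so $aD-K_X\equiv aD$ is never big and the base-point-free theorem gives nothing; the absence of rational curves only controls the cone of curves (as in Proposition \ref{bigamplecone}) and does not produce sections of $|mD|$. This is exactly the abundance-type statement that the paper explicitly flags as open: in the remark following the theorem it is stated that ``a divisor with numerical dimension one which intersects in zero the second Chern class of $X$ is just conjecturally semiample.'' Without this step your fibration over a curve never comes into existence, and the rest of the argument (triviality of $c_2(F)$, torus-quotient fibres, isotriviality, the contradiction with $\tilde q(X)=0$) has nothing to act on. Your second acknowledged obstacle --- that a degenerate fibre of a family of $(n-1)$-dimensional torus quotients must carry rational curves --- is also a genuine gap with no available substitute for Kodaira's classification, but it is moot given the first one.

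The paper sidesteps both difficulties by never trying to realise $D$ as a fibre class. Under the contradiction hypothesis that $X$ has no rational curves, the nef and pseudoeffective cones coincide (Proposition \ref{bigamplecone}), so for an ample $H$ the divisor $\overline{N}=H-\tfrac{H^n}{nH^{n-1}\cdot D}\,D$ is nef, not big, and of numerical dimension $n-1$ (Proposition \ref{divisorebello}); the hypothesis $c_2(X)\cdot D=0$ kills all cross terms so that $c_2(X)\cdot\overline{N}^{\,n-2}=c_2(X)\cdot H^{n-2}>0$ (Corollary \ref{divisorec}, using Lemmas \ref{miyaoka} and \ref{intersecitonchern} and the nonvanishing of $c_2$ from Remark \ref{chernnonzero}). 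For a nef divisor of numerical dimension $n-1$ with positive intersection against $\operatorname{Td}_2$, semiampleness \emph{is} a theorem (Koll\'ar), yielding a genus one fibration $X\to B$ over an $(n-1)$-dimensional base, to which Theorem \ref{canonic} applies and produces the desired rational curves. In short: where you need abundance for $D$ (open) and a higher-dimensional Kodaira table (unavailable), the paper manufactures a different nef class for which both the fibration theorem and the rational-curve production (Theorem \ref{duee}) are already in hand.
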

\begin{remark}
Theorem \ref{newovercurves} is a generalization of \cite[Theorem 1.6]{diverio2016rational} also for smooth varieties. Indeed for smooth varieties with trivial canonical bundle with a fibration onto a curve with general fiber an abelian variety $F$, the class of $F$ in $N^1(X)$ has numerical dimension one and intersect in zero the second Chern class of $X$, \textsl{i.e.} $F\cdot c_2(X)=0$ \cite[Section 3]{diverio2016rational}. Moreover a divisor with numerical dimension one which intersects in zero the second Chern class of $X$ is just conjecturally semiample.
\end{remark}
The geometric meaning of Theorem \ref{newovercurves} is clear if the divisor is also semiample. In this case the Itaka fibration associated to $D$ is a fibration onto a curve. A general fiber $F$ of such a morphism intersects trivially $c_2(X)$, \emph{i.e.} $c_2(F)=F\cdot c_2(X)=0$. If $F$ is contained in the regular part of $X$, then by adjunction formula $F$ has automatically trivial canonical bundle. This in particular implies that there is an abelian variety with a finite quasi-\'etale cover to $F$.

\subsection{Chern classes for singular varieties}
Let us spend some words about the Chern classes for singular varieties. The Todd class and the Chern classes) of an arbitrary algebraic scheme are defined in \cite[Section 18.3]{Ful84}. 

\begin{remark}
	Let $\pi:\tilde{X}\rightarrow X $ be a proper birational morphism that is an isomorphism outside $Z\subset X$. Then $$\operatorname{Td}(X)=\pi_*\operatorname{Td}(\tilde{X})+\alpha \in A_*(X)_{\mathbb{Q}}$$
	where $\alpha$ is a class supported in $Z$. In particular this tells us that if $X$ is a variety smooth in codimension two, the definition $c_2(X):=\pi_* c_2(\tilde{X})$ for some resolution $\pi:\tilde{X}\rightarrow X$ agrees with the definition in \cite{Ful84}. We want to prove that these two definitions agree also for varieties with rational singularities.
\end{remark}
\begin{remark}
	Using the definitions in \cite{Ful84} the Hirzebruch-Riemann-Roch Theorem \cite[Corollary 18.3.1]{Ful84} holds for any complete scheme. Let $X$ be a projective variety with rational singularities, \textsl{e.g.} with at most dlt singularities, and $\pi:\tilde{X}\rightarrow X$ a resolution of singularities. By definition of rational singularities for any line bundle $L$ on $X$ we have $\chi(X,L))=\chi(\tilde{X},\pi^* L)$. Applying Hirzebruch-Riemann-Roch to $X$ and to $\tilde{X}$ we get $$\int_X \operatorname{ch}(L)\cdot \operatorname{Td}(X)=\int_{\tilde{X}} \operatorname{ch}(\pi^*L)\cdot \operatorname{Td}(\tilde{X})=\int_X \operatorname{ch}(L)\cdot \pi_* \operatorname{Td}(\tilde{X}) $$
	where the last equality follows from the projection formula \cite[Proposition 2.5 (c)]{Ful84}. Since this equality holds for any line bundle $L$, this tells us that $c_2(X)= \pi_* c_2(\tilde{X})$ as elements in $N^2(X)$. In particular for a variety $Y$ with at most klt singularities we can take as definition $c_2(Y)=\pi_* c_2(\tilde{Y})$ for some resolution of the singularities of $Y$.
\end{remark}
The pseudo-effectiveness of the second Chern class proved by Miyaoka holds also in our setting.
\begin{lemma}\label{miyaoka}
	Let $X$ be a normal projective variety with canonical singularities and $K_X\equiv 0$. For any $D_1,\dots , D_{n-2}$ nef divisors on $X$ we have $c_2(X)\cdot D_1\cdots D_{n-2}\geq 0$.
\end{lemma}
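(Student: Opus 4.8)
The plan is to reduce the statement to the smooth case via a resolution and then invoke Miyaoka's pseudo-effectivity of the second Chern class for non-uniruled smooth varieties. First I would choose a resolution of singularities $\pi:\tilde{X}\rightarrow X$. Since $X$ has canonical (hence rational) singularities, the two remarks preceding this lemma give the equality $c_2(X)=\pi_* c_2(\tilde{X})$ in $N^2(X)$. Combining this with the projection formula one obtains
\[ c_2(X)\cdot D_1\cdots D_{n-2}=\pi_* c_2(\tilde{X})\cdot D_1\cdots D_{n-2}=c_2(\tilde{X})\cdot \pi^* D_1\cdots \pi^* D_{n-2}, \]
and since the pullback of a nef divisor is again nef, the problem is reduced to proving that $c_2(\tilde{X})\cdot \pi^* D_1\cdots \pi^* D_{n-2}\geq 0$ on the smooth variety $\tilde{X}$ against the nef classes $\pi^* D_i$.

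Next I would verify that $\tilde{X}$ is not uniruled, which is exactly the hypothesis needed to run Miyaoka's argument. Because $X$ has canonical singularities one has the discrepancy identity $K_{\tilde{X}}= \pi^* K_X+E$ with $E$ an effective $\pi$-exceptional $\Q$-divisor; as $K_X\equiv 0$ this gives $K_{\tilde{X}}\equiv E\geq 0$, so $K_{\tilde{X}}$ is pseudo-effective. By the theorem of Boucksom--Demailly--Paun--Peternell, a smooth projective variety is uniruled if and only if its canonical class is not pseudo-effective, and hence $\tilde{X}$ is not uniruled. (Equivalently, $K_X\equiv 0$ with canonical singularities forces $\kappa(\tilde{X})=0$, from which non-uniruledness follows at once.) Now I would apply Miyaoka's theorem on the pseudo-effectivity of the second Chern class, which follows from the generic semipositivity of the cotangent sheaf of a non-uniruled smooth variety: this yields $c_2(\tilde{X})\cdot A_1\cdots A_{n-2}\geq 0$ for all ample divisors $A_i$. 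Finally, each $\pi^* D_i$ being nef is a limit of ample classes $\pi^* D_i+\varepsilon A$ in $N^1(\tilde{X})_{\R}$, so by multilinearity and continuity of the intersection pairing, letting $\varepsilon\to 0$ gives the inequality for the nef classes $\pi^* D_i$ and completes the reduction.

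I expect the genuine analytic difficulty to be entirely packaged inside Miyaoka's theorem, whose proof rests on the generic nefness of $\Omega^1_{\tilde{X}}$ together with Bogomolov-type inequalities; I would simply cite it. The work specific to this lemma lies instead in the two structural points of the reduction: justifying $c_2(X)=\pi_* c_2(\tilde{X})$, which is handled by the discussion of Chern classes for rational singularities above, and checking that $\tilde{X}$ is non-uniruled so that Miyaoka's hypothesis is met. The latter is the main place where the assumptions $K_X\equiv 0$ and canonical (rather than merely log terminal) singularities are used, since they are what make the discrepancy divisor $E$ effective and hence $K_{\tilde{X}}$ pseudo-effective.
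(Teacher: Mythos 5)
Your reduction --- resolve, identify $c_2(X)=\pi_*c_2(\tilde X)$ via the rational-singularities discussion, and transport the inequality through the projection formula --- is sound and close in spirit to the paper's argument, which instead passes to a terminalization. The gap is in the black box you invoke at the end. Miyaoka's pseudo-effectivity of $c_2$ is not a statement about arbitrary non-uniruled smooth varieties: it is a statement about \emph{minimal} varieties, i.e., it requires the canonical class to be nef, and the inequality it provides is $c_2(\tilde X)\cdot\pi^*A_1\cdots\pi^*A_{n-2}\ge 0$ for nef classes $A_i$ pulled back from the minimal model, not $c_2(\tilde X)\cdot A_1\cdots A_{n-2}\ge 0$ for all ample $A_i$ on the resolution. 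Generic semipositivity of $\Omega^1_{\tilde X}$ does hold under non-uniruledness alone, but the passage from there to non-negativity of $c_2$ against a polarization uses the nefness of $c_1(\Omega^1_{\tilde X})=K_{\tilde X}$ in an essential way (the Bogomolov-type contributions $c_1(Q_i)^2\cdot H^{n-2}$ from the Harder--Narasimhan pieces are only controlled, via Hodge index, when the total first Chern class is nef). Your $\tilde X$ has $K_{\tilde X}\equiv E$ effective but in general not nef, so the form of the theorem you cite is not available, and your $\varepsilon$-perturbation to arbitrary ample classes on $\tilde X$ genuinely relies on that unavailable stronger form.

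The repair is small, because you only ever need the inequality against the pullbacks $\pi^*D_i$: that is exactly what Miyaoka's Theorem 6.6 yields when applied to the minimal variety with resolution $\pi$. This is precisely why the paper first passes to a terminalization $\nu:\tilde X\to X$: the terminalization is crepant, so its canonical class is still numerically trivial (in particular nef), and it is smooth in codimension two, which places it squarely within the hypotheses of Miyaoka's theorem; two applications of the projection formula then bring the inequality down to $X$. Replace your citation by the correct one (applied to $X$ or to its terminalization rather than to the non-minimal smooth model) and the argument closes; the non-uniruledness of $\tilde X$, which you verify carefully, is then no longer the relevant hypothesis.
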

\begin{proof}
	Let $\nu :\tilde{X} \rightarrow X $ be a terminalization of $X$. The canonical bundle of $\tilde{X}$ is still numerically trivial and $\tilde{X}$ is smooth in codimension two. The divisors $\nu^* D_1,\dots ,\nu^* D_{n-2}$ are nef, so applying \cite[Theorem 6.6]{miyaoka1987chern} and the projection formula we get $c_2(\tilde{X})\cdot D_1 \cdots D_{n-2}\geq 0$. The conclusion follows applying another time the projection formula to $\nu$.
\end{proof}
In our setting to prove that the second Chern class of $X$ is trivial it is sufficient to show that $c_2(X)\cdot H^{n-2}=0$ for some ample divisor $H$.
\begin{lemma}\label{intersecitonchern}
	Let $X$ be a normal projective variety with canonical singularities and $K_X\equiv 0$. Then $c_2(X)=0$ in $N^2(X)$ if and only if there exist $H_1,\dots , H_{n-2}$ ample line bundles on $X$ such that $c_2(X)\cdot H_1\cdots H_{n-2}=0$. In particular if $c_2(X)\neq 0$ in $N^2(X)$ then for any ample line bundle $c_2(X)\cdot H^{n-2}> 0$.
\end{lemma}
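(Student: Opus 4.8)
The statement is an equivalence, and one implication is immediate: if $c_2(X)=0$ in $N^2(X)$, then $c_2(X)\cdot H_1\cdots H_{n-2}=0$ for any divisors whatsoever. So the content lies in the reverse implication, and my whole plan rests on the pseudo-effectivity already recorded in Lemma \ref{miyaoka}: the symmetric multilinear form
\[ F(D_1,\dots,D_{n-2}):=c_2(X)\cdot D_1\cdots D_{n-2} \]
on $N^1(X)$ takes nonnegative values as soon as all of $D_1,\dots,D_{n-2}$ are nef. I would show that this form vanishes identically, which is the assertion that $c_2(X)=0$ in $N^2(X)$, using only its nonnegativity on the nef cone together with the hypothesis that it has a zero at an interior (ample) point.

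The elementary engine, which I would isolate first, is a one-variable observation. Let $\ell\colon N^1(X)\to\R$ be a linear functional with $\ell\geq 0$ on the nef cone and $\ell(H)=0$ for some ample class $H$. Since $H$ is ample it lies in the interior of the nef cone, so for every $v\in N^1(X)$ the class $H+tv$ is nef for all $t$ of sufficiently small absolute value; then $0\leq \ell(H+tv)=t\,\ell(v)$ for small $t$ of both signs, forcing $\ell(v)=0$. Hence such an $\ell$ is identically zero.

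Next I would bootstrap this from a single slot to the whole form by peeling off one argument at a time. Freezing the ample classes $H_2,\dots,H_{n-2}$, the functional $D\mapsto F(D,H_2,\dots,H_{n-2})$ is nonnegative on the nef cone by Lemma \ref{miyaoka} and vanishes at the ample class $H_1$ by hypothesis, so by the previous step it vanishes for every $D$. I then continue by induction: assuming $F(E_1,\dots,E_{k-1},H_k,\dots,H_{n-2})=0$ for all $E_i\in N^1(X)$, I fix \emph{ample} classes $D_1,\dots,D_{k-1}$ and apply the one-variable observation to $D\mapsto F(D_1,\dots,D_{k-1},D,H_{k+1},\dots,H_{n-2})$, which is again nonnegative on the nef cone and vanishes at $H_k$; this kills the next slot for ample $D_1,\dots,D_{k-1}$, and since the ample classes span $N^1(X)$, multilinearity then removes the ampleness restriction on $D_1,\dots,D_{k-1}$. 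After $n-2$ steps $F\equiv 0$, so $c_2(X)=0$ in $N^2(X)$.

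Finally the ``in particular'' clause is the contrapositive, read off the diagonal: taking $H_1=\dots=H_{n-2}=H$ shows that if $c_2(X)\cdot H^{n-2}=0$ for a single ample $H$ then $c_2(X)=0$, so when $c_2(X)\neq 0$ the number $c_2(X)\cdot H^{n-2}$ is nonzero, and being nonnegative by Lemma \ref{miyaoka} it is strictly positive. The one delicate point, and the step I would watch most carefully, is the inductive passage from one free slot to many: nonnegativity is available only on the nef cone, so I cannot apply the one-variable lemma once an already-freed slot has been pushed to a non-nef class. Keeping the previously freed slots ample until the very end, and only then invoking that the ample classes span $N^1(X)$, is exactly what makes the induction go through.
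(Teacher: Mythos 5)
Your proof is correct and follows essentially the same route as the paper's: both reduce to ample classes by multilinearity and openness of the ample cone, use the pseudoeffectivity of $c_2$ from Lemma \ref{miyaoka} as the engine, and run an induction over the $n-2$ slots. The only difference is cosmetic --- you package the key step as ``a linear functional nonnegative on the nef cone and vanishing at an interior (ample) point is identically zero,'' while the paper realizes the same step by evaluating at the ample classes $H_i\pm L_i$ and expanding the resulting product into a sum of nonnegative terms, each of which must then vanish.
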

\begin{proof}
	The proof of this lemma is an adaptation of the proof of \cite[Proposition 4.8]{greb2016etale}. We start proving that if there exist ample line bundle $H_1,\dots , H_{n-2}$ on $X$ such that $c_2(X)\cdot H_1\cdots H_{n-2}=0$ then for any line bundle $L_1,\dots , L_{n-2}$ we have $c_2(X)\cdot L_1\cdots L_{n-2}=0$, \textsl{i.e.} $c_2(X)=0$ in $N^2(X)$. Since the ample cone is open in $N^1(X)$ and the intersection product is multilinear it is enough to prove that the intersection is trivial for $L_i$ ample line bundle.
	
	Up to taking large multiples of the divisors $H_i$ we can also assume that $H_i \pm L_i$ are ample divisors in $X$. We prove by induction on $k$ that $$c_2(X)\cdot (H_1+L_1)\cdots (H_k +L_k)\cdot H_{k+1}\cdots H_{n-2}= 0.$$
	For $k=0$ this is the hypothesis. Suppose it is true for $k$, we have
	$$0= c_2(X)\cdot (H_1+L_1)\cdots (H_k +L_k)\cdot H_{k+1}\cdots H_{n-2}=$$
	$$=c_2(X)\cdot (H_1+L_1)\cdots (H_k +L_k)\cdot (H_{k+1}\pm L_{k+1})\cdots H_{n-2}.$$
	Both the summands are non negative by Lemma \ref{miyaoka}, so they must be zero. For $k=n-2$ and expanding the product we get 
	$$0=\sum c_2(X)\cdot A_1 \cdots A_{n-2}$$ where $A_i\in \{H_i, L_i\}$. Since $A_i $ is nef, this is a zero sum of non-negative numbers whose sum is zero, so every summand must be zero. In particular we get $c_2(X)\cdot L_1 \cdots L_{n-2}=0$.
	
	To conclude we have to prove that if $c_2 (X) $ is non-zero then for any ample divisor $H$ the number $ c_2 (X) \cdot H^{n-2}$ is positive, but this is immediate since it is a non zero number by the above arguments, and it is non-negative by Lemma \ref{miyaoka}.
\end{proof}
\begin{remark}\label{chernnonzero}
	The second Chern class of a Calabi--Yau variety $X$ is non-zero. This is well-known under the further assumption that $X$ is smooth in codimension two. This is proved under the further assumption that $X$ is canonical and $\mathbb{Q}$-factorial in \cite[Theorem 1.4]{greb2016movable}. In a very recent paper it is proved that a normal projective variety with at most klt singularities, trivial canonical bundle and trivial second Chern class is a quasi-\'etale quotient of an abelian variety \cite[Theorem 1.2 and Remark 1.5]{lu2018acharacterization}, so the augmented irregularity is equal to the dimension. Note that the converse does not hold because $c_2 ((E\times E) / \pm )=24$. Using only the results in \cite{greb2016movable} we cannot prove that the second Chern class of a Calabi--Yau variety is non-zero because $X$ is not $\Q$-factorial, and a priori the second Chern class of a $\Q$-factorialization can be contracted in $X$.
\end{remark}

	\subsection{Preliminar results}

To prove Theorem \ref{newovercurves} we need some basic results.
The well-known statement for $\mathbb{Q}$-divisors that a nef divisor is big if and only if it has positive top self-intersection \cite[Theorem 2.2.13]{lazarsfeld2017positivity1} holds also for $\R$-divisors. This fact is well-known to the experts but we haven't found any references in the literature.

The following is an interesting consequence for variety with no rational curves and numerically trivial canonical bundle.
\begin{proposition}\label{bigamplecone}
	Let $X$ be a projective variety with at most log terminal singularities, numerically trivial canonical bundle and no rational curves. Then the ample cone and the big cone coincide.
\end{proposition}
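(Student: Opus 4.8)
The plan is to reduce the statement to the single inclusion "big cone $\subseteq$ nef cone". Since an ample class $\alpha$ satisfies $\operatorname{vol}(\alpha)=\alpha^n>0$, the ample cone is contained in the big cone, and for the reverse inclusion it suffices to prove that every big class in $N^1(X)$ is nef. Indeed, the big cone is open and the nef cone is closed, so once the big cone is known to sit inside the nef cone it sits inside its interior, which by Kleiman's criterion is precisely the ample cone. Moreover, because the big cone is open and the nef cone is closed, it is enough to verify the inclusion on the dense set of rational classes.

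So first I would fix a big $\Q$-Cartier $\Q$-divisor $D$ and argue by contradiction, assuming $D$ is \emph{not} nef, i.e. there is a class $z\in\overline{NE}(X)\setminus\{0\}$ with $D\cdot z<0$. The key move is to manufacture genuine $K$-negativity from this. Since $D$ is big, for $m\gg 0$ the system $|mD|$ is non-empty, so we may pick an effective divisor $E\sim mD$; being linearly equivalent to the $\Q$-Cartier divisor $mD$, the divisor $E$ is itself $\Q$-Cartier, and for small rational $\delta>0$ the pair $(X,\delta E)$ is still klt, since log terminality is an open condition. Crucially, $K_X\equiv 0$ yields
\[ K_X+\delta E \equiv \delta E \equiv \delta m\, D, \]
so that $K_X+\delta E$ is numerically a positive multiple of $D$; in particular it is $\Q$-Cartier and, under our assumption, not nef.

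Now I would invoke the cone theorem for the klt pair $(X,\delta E)$: one has $\overline{NE}(X)=\overline{NE}(X)_{(K_X+\delta E)\ge 0}+\sum_j \R_{\ge 0}[\Gamma_j]$ with each $\Gamma_j$ a rational curve lying on $X$. Since $K_X+\delta E$ is not nef it takes a negative value on $\overline{NE}(X)$, and as it is non-negative on the first summand it must be negative on one of the rays, i.e. $(K_X+\delta E)\cdot\Gamma_j<0$ for some $j$. But then $\Gamma_j$ is a rational curve contained in $X$, contradicting the hypothesis that $X$ carries no rational curves. Hence $D$ is nef; letting $D$ range over rational big classes and passing to the closure shows that the big cone is contained in the nef cone, completing the argument. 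This single step simultaneously rules out big classes that are non-nef and big-and-nef classes that fail to be ample, so there is no need to treat a separate "nef and big implies ample" case (which could alternatively be handled by the base point free theorem together with \cite[Theorem 2]{kawamata1991length}).

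The point that demands the most care is exactly the passage from the failure of nefness to an actual rational curve \emph{on $X$ itself}. The hypothesis $K_X\equiv 0$ makes the cone theorem vacuous for $X$ as it stands, and it is only after tilting the boundary by a small effective multiple of the big class $D$ that the $(K_X+\delta E)$-negative part of $\overline{NE}(X)$ becomes non-trivial and produces rational curves. For this I must also stay on $X$ rather than on a resolution or a $\Q$-factorialization: because classes in $N^1(X)$ are represented by ($\Q$-)Cartier divisors, the divisor $E$ is $\Q$-Cartier and the cone theorem applies directly to $(X,\delta E)$, so the curves it produces genuinely live on $X$ and immediately contradict the hypothesis.
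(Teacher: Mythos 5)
Your proof is correct and takes essentially the same approach as the paper: both perturb by a small effective ($\Q$-Cartier) boundary to get a klt pair with $K_X+\delta E\equiv\delta E$, apply the cone theorem, and use the absence of rational curves to force nefness, then pass from ``big implies nef'' to ``big equals ample'' via openness. The only cosmetic difference is that the paper states the inclusion as effective cone $\subseteq$ nef cone and takes interiors, while you argue by contradiction directly on a big class.
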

\begin{proof}
	Let $D$ be any effective $\Q$-divisor. For small positive and rational $\varepsilon$ the pair $(X, \varepsilon D) $ is klt. Since there are no rational curves in $X$, the cone theorem \cite[Theorem 3.7]{kollar2008birational} tells us that $\varepsilon D$ is also nef. It follows that the effective cone is contained in the nef cone. Passing to the interior of such cones we get the thesis.
\end{proof}
Now following \cite{lazarsfeld2017positivity1} we define two cones that help us to study nef divisors that are not ample.
\begin{definition}
	The \emph{null cone} $\mathcal{N}_X\subset N^1(X)$ is the set of classes of divisors $D$ such that $D^n=0$. The \emph{boundary cone} $\mathcal{B}_X\subset N^1(X)$ is the boundary of the nef cone. 
\end{definition}
Note that these cones are not convex cones.
The following corollary that is already known by the experts explains the relation between these cones in our context.
\begin{proposition}
	Let $X$ be a variety with log terminal singularities, numerically trivial canonical bundle and no rational curves. The boundary of the ample cone is contained in the null cone, \textsl{i.e.} $\mathcal{B}_X\subset \mathcal{N}_X$.
\end{proposition}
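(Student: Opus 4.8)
The plan is to read off the conclusion from the two facts recorded immediately before the statement: the characterization that a nef $\R$-divisor is big precisely when its top self-intersection is strictly positive, and Proposition \ref{bigamplecone}, which identifies the ample cone with the big cone under exactly these hypotheses (log terminal singularities, $K_X\equiv 0$, no rational curves).

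First I would fix a class $D\in\mathcal{B}_X$. By Kleiman's criterion the ample cone is the interior of the nef cone and the nef cone is the closure of the ample cone, so the boundary of the nef cone and the boundary of the ample cone coincide; in particular $D$ is nef but not ample. Applying Proposition \ref{bigamplecone}, the hypotheses guarantee that the ample cone equals the big cone, and therefore $D$ fails to be big as well.

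It then remains to feed this into the big--nef criterion. A nef $\R$-divisor always satisfies $D^n\geq 0$, and by the criterion $D^n>0$ would force $D$ to be big. Since $D$ is nef but not big, we are left with $D^n=0$, that is $D\in\mathcal{N}_X$, which is exactly the desired inclusion $\mathcal{B}_X\subset\mathcal{N}_X$.

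I do not expect a genuine obstacle here, since the real content has been pushed into Proposition \ref{bigamplecone} and into the big--nef self-intersection criterion. The only points requiring care are the identification of the boundary of the ample cone with the boundary of the nef cone, and the observation that ``not ample'' upgrades to ``not big'' solely because of the special geometry (no rational curves) encoded in Proposition \ref{bigamplecone}; without that hypothesis a nef non-ample class could still be big, and the inclusion would fail.
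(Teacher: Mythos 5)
Your proof is correct and follows essentially the same route as the paper: pass from ``on the boundary of the ample cone'' to ``nef but not big'' via Proposition \ref{bigamplecone}, then conclude $D^n=0$ from the big--nef self-intersection criterion for $\R$-divisors. You simply make explicit two points the paper leaves implicit (the identification of the boundaries of the ample and nef cones, and the nonnegativity $D^n\geq 0$ for nef classes), which is a reasonable elaboration rather than a different argument.
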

\begin{proof}
	In the boundary of the ample cone there are nef $\mathbb{R}$-divisors that by Proposition \ref{bigamplecone} are not big $\mathbb{R}$-divisors. These $\mathbb{R}$-divisors has trivial top self-intersection and so they are in the null cone.
\end{proof}
	This proposition leads us to find (many) divisors with numerical dimension $n-1$ as explained in the following proposition.
\begin{proposition}\label{divisorebello}
	Let $X$ be a variety with log terminal singularities, numerically trivial canonical bundle and without rational curves. Let $H$ and $D$ be two divisors on $X$ that are respectively ample and nef of numerical dimension one. There is a (unique) rational number $t_0$  such that the $\mathbb{Q}$-divisor $\overline{N}(D,H)=H-t_0 \cdot D$ is nef and has numerical dimension $n-1$.
\end{proposition}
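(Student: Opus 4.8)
The plan is to start at the ample class $H$ and slide along the ray $H-tD$ until it first leaves the interior of the nef cone. Concretely, I would set $t_0:=\sup\{t\ge 0 : H-tD \text{ is nef}\}$. The first thing to check is that $0<t_0<\infty$. Positivity is immediate: $H$ is ample and the ample cone is open, so $H-tD$ remains ample, in particular nef, for all sufficiently small $t>0$. For finiteness, suppose $H-tD$ were nef for every $t\ge 0$; dividing by $t$ and letting $t\to\infty$, the class $-D$ would lie in the closed nef cone, and together with the nefness of $D$ this would force $D\equiv 0$, contradicting that $D$ has numerical dimension one. Hence $t_0$ is a finite positive real number.

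Next I would locate the class $N:=H-t_0D$ on the boundary of the nef cone. Since the nef cone is closed, $N$ is nef; and $N$ cannot be ample, for otherwise $N-\varepsilon D=H-(t_0+\varepsilon)D$ would still be ample for small $\varepsilon>0$, contradicting the definition of $t_0$. Thus $N\in\mathcal{B}_X$. By the inclusion $\mathcal{B}_X\subset\mathcal{N}_X$ proved above (which is exactly where the absence of rational curves enters, via Proposition \ref{bigamplecone}), we conclude $N^n=0$.

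The remaining steps are formal once we exploit that $D$ has numerical dimension one, so that $D^k=0$ in $N^k(X)$ for every $k\ge 2$. Expanding the top self-intersection, every binomial term carrying a factor $D^k$ with $k\ge 2$ vanishes, leaving $N^n=H^n-n\,t_0\,H^{n-1}D$. The condition $N^n=0$ then yields the explicit value $t_0=\frac{H^n}{n\,H^{n-1}D}$, which is a positive rational number because $H^{n-1}D>0$; indeed for a nef class the numerical dimension equals the largest $k$ with $D^k\cdot H^{n-k}>0$, so numerical dimension one gives $D\cdot H^{n-1}>0$. To see that $N$ has numerical dimension exactly $n-1$, I would expand $N^{n-1}=H^{n-1}-(n-1)t_0H^{n-2}D$ and intersect with $H$, obtaining $N^{n-1}\cdot H=H^n-(n-1)t_0H^{n-1}D=\frac{H^n}{n}>0$; hence $N^{n-1}\ne 0$ in $N^{n-1}(X)$, while $N^n=0$. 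Finally, uniqueness is built into the same computation: if $H-tD$ is nef of numerical dimension $n-1$, then $(H-tD)^n=0$, and the expansion forces $t=\frac{H^n}{nH^{n-1}D}=t_0$.

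The main obstacle, conceptually, is the middle step: identifying $N$ as a boundary point of the nef cone and deducing $N^n=0$. This is precisely where the hypotheses of numerically trivial canonical bundle and absence of rational curves are used, through the coincidence of the ample and big cones and the resulting containment $\mathcal{B}_X\subset\mathcal{N}_X$. By contrast, once $N^n=0$ is available, the numerical-dimension-one hypothesis collapses each binomial expansion to its first two terms, and both the determination of $t_0$ and the computation of the numerical dimension reduce to elementary bookkeeping.
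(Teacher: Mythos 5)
Your proposal is correct and follows essentially the same route as the paper: slide along the line $H-tD$ until it meets the boundary of the nef cone, use the containment $\mathcal{B}_X\subset\mathcal{N}_X$ (hence $N^n=0$) to pin down $t_0=\frac{H^n}{nH^{n-1}\cdot D}$, and use $D^2=0$ in $N^2(X)$ to collapse the binomial expansions. Your write-up is in fact somewhat more careful than the paper's (explicitly defining $t_0$ as a supremum, checking $0<t_0<\infty$, and addressing uniqueness), but the underlying argument is the same.
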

\begin{proof}
	The line in $N^1(X)$ for $t\in \mathbb{R}$ $$N_t=H+t\cdot D$$ gives us an interesting divisor in the intersection with the null cone.
	This line is parallel to the extremal ray of the nef cone generated by $[D]$. The divisor $D$ is nef so the line $N_t$ is contained in the nef cone for $t\geq \frac{-H^n}{nH^{n-1} \cdot D} $ and intersect the null cone when there is the equality. The divisor in the intersection $\overline{N}=H-\frac{H^n}{nH^{n-1}\cdot D} D$ 
	is a $\mathbb{Q}$-divisor because $H$ and $D$ are $\mathbb{Q}$-divisors and $\frac{H^n}{nH^{n-1}}\in \mathbb{Q}$. The divisor $\overline{N}$ has numerical dimension $n-1$ because $\overline{N}^{n-1}\cdot D=H^{n-1}\cdot D\neq 0 $ and it is not big. 
\end{proof}
In particular this proposition implies the following corollary.
\begin{corollary}\label{divisorec}
		Let $X$ be a variety with canonical singularities, numerically trivial canonical bundle and with no rational curves. If $c_2(X)\neq 0$ as element in $N^2(X)$ but $c_2(X)\cdot D=0$ in $N^3(X)$ for some nef $\mathbb{Q}$-divisor $D$ with $\nu(D)=1$, then there exists an ample $\mathbb{Q}$-divisor $H$ such that the $\mathbb{Q}$-divisor $\overline{N}(D,H)$ constructed in Proposition \ref{divisorebello} satisfies $c_2(X)\cdot \overline{N}(D,H)^{n-2}>0$.
\end{corollary}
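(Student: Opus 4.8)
The plan is to compute the number $c_2(X)\cdot \overline{N}(D,H)^{n-2}$ directly for an arbitrary ample $\mathbb{Q}$-divisor $H$, using the hypothesis $c_2(X)\cdot D=0$ in $N^3(X)$ to discard almost every term of the expansion. Recall from Proposition \ref{divisorebello} that $\overline{N}(D,H)=H-t_0\,D$ with $t_0=\frac{H^n}{nH^{n-1}\cdot D}\in\mathbb{Q}$, a nef $\mathbb{Q}$-divisor of numerical dimension $n-1$. So in fact I expect that \emph{any} ample $\mathbb{Q}$-divisor $H$ will do, and the existence asserted in the statement will come for free.

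First I would expand the self-intersection by the binomial theorem:
\[
c_2(X)\cdot \overline{N}^{\,n-2}=\sum_{k=0}^{n-2}\binom{n-2}{k}(-t_0)^{k}\, c_2(X)\cdot H^{\,n-2-k}\cdot D^{k}.
\]
The key observation is then that for every $k\geq 1$ the factor $D^{k}$ contains at least one copy of $D$, so by multilinearity of the intersection product on numerical classes each such summand factors through the class $c_2(X)\cdot D\in N^3(X)$, which is zero by hypothesis; concretely $c_2(X)\cdot H^{n-2-k}\cdot D^{k}=(c_2(X)\cdot D)\cdot H^{n-2-k}\cdot D^{k-1}=0$. Hence every term with $k\geq 1$ vanishes and the whole sum collapses to the single term $k=0$, giving
\[
c_2(X)\cdot \overline{N}^{\,n-2}=c_2(X)\cdot H^{\,n-2}.
\]
Finally, since by assumption $c_2(X)\neq 0$ in $N^2(X)$ and $H$ is ample, Lemma \ref{intersecitonchern} yields $c_2(X)\cdot H^{\,n-2}>0$, which is exactly the desired conclusion.

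The step requiring the most care will be the vanishing of the mixed terms: one must be sure that the intersection pairing on the numerical groups $N^{\bullet}(X)$ is genuinely multilinear and that ``$c_2(X)\cdot D=0$ in $N^3(X)$'' really propagates, i.e.\ that a class which is zero in $N^3(X)$ continues to give zero after further intersection with nef divisors. Given the definition of $c_2(X)$ adopted earlier for varieties with canonical singularities (as $\pi_*c_2(\tilde X)$ for a resolution $\pi$) together with the standard formalism of numerical intersection, this is routine, and it is the only place where the hypothesis $c_2(X)\cdot D=0$ is actually used.
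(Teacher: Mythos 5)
Your proof is correct and follows essentially the same route as the paper: expand $\overline{N}(D,H)^{n-2}=(H-t_0D)^{n-2}$, kill every mixed term using $c_2(X)\cdot D=0$ in $N^3(X)$, and conclude $c_2(X)\cdot\overline{N}(D,H)^{n-2}=c_2(X)\cdot H^{n-2}>0$ via Lemma \ref{intersecitonchern}. Your observation that the argument works for \emph{any} ample $\mathbb{Q}$-divisor $H$ is also implicit in the paper's proof, which likewise invokes Lemma \ref{intersecitonchern} for an arbitrary ample $H$.
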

\begin{proof}
	By Proposition \ref{divisorebello}, $X$ contains a $\mathbb{Q}$-divisor $N$ of numerical dimension $n-1$. By Lemma \ref{miyaoka} we know that the intersection of $c_2(X)$ with $n-2$ nef divisors is non negative. By Lemma \ref{intersecitonchern} for any ample divisor $H$ we have $H^{n-2}\cdot c_2(X)> 0$. By hypothesis $c_2(X)\cdot D= 0$, so $c_2(X)\cdot (\overline{N} (D,H))^{n-2}=c_2(X)\cdot (H-\frac{H^n}{nH^{n-1}\cdot D} D)^{n-2}= c_2(X) \cdot H^{n-2}> 0$. 
\end{proof}
	\subsection{Proof of Theorem \ref{newovercurves}}
	Now the proof of Theorem \ref{newovercurves} follows from the results obtained in this section and by Theorem \ref{canonic}.

\begin{proof}[Proof of Theorem \ref{newovercurves}]
	If the singularities of $X$ are not canonical then $X$ is uniruled by \cite[Theorem 11]{kollar2009quotients}, so we can suppose that $X$ has canonical singularities. Moreover by Remark \ref{chernnonzero} we now that $c_2(X)\neq 0$.
	Suppose by contradiction that there are no rational curves in $X$. Thanks to Corollary \ref{divisorec} we can find a nef $\mathbb{Q}$-divisor $\overline{N}$ 
	such that $0<c_2(X) \cdot \overline{N}^{n-2}=12\operatorname{Td}_2(X)\cdot \overline{N}^{n-2}$. So applying \cite[Theorem 10]{kollar2015deformations} the divisor $\overline{N}$ induces an genus one fibration $X\rightarrow B$. Thus we can apply Theorem \ref{canonic} to find rational curves in $X$, which gives a contradiction. 
\end{proof}

	The idea of Theorem \ref{newovercurves} is to find a nef divisor $D$ in $X$ with Itaka dimension $n-1$. In the proof of Theorem \ref{newovercurves} we explained that in our setting it is sufficient to find a nef $\mathbb{Q}$-divisor $D$ with numerical dimension $n-1 $ such that $D^{n-2}\cdot c_2(X)>0$. A careful analysis in dimension three can be found in \cite{diverio2011conjecture}. They work with smooth varieties but their proofs works verbatim also for Calabi--Yau varieties as in Definition \ref{calabiyau}.
	\bibliographystyle{alpha}
	 
\bibliography{C:/Users/fabrizio/Desktop/uni/personali/biblio}{}

\end{document}